
\documentclass[12pt]{amsart}
\usepackage{amscd,amsmath,amsthm,amssymb,graphics}
\usepackage{lmodern,pst-node}
\usepackage{multicol}
\usepackage{epic,eepic}
\usepackage{amsfonts,amssymb,amscd,amsmath,enumerate,verbatim}

\psset{unit=0.7cm,linewidth=0.8pt,arrowsize=2.5pt 4}

\newpsstyle{fatline}{linewidth=1.5pt}
\newpsstyle{fyp}{fillstyle=solid,fillcolor=verylight}
\definecolor{verylight}{gray}{0.97}
\definecolor{light}{gray}{0.9}
\definecolor{medium}{gray}{0.85}
\definecolor{dark}{gray}{0.6}



\unitlength=0.7cm

%
%
%
\def\NZQ{\mathbb}               

\def\ZZ{{\NZQ Z}}

%
%
\def\frk{\mathfrak}               

\def\pp{{\frk p}}

\def\mm{{\frk m}}

\def\nn{{\frk n}}
\def\Phi{{\frk N}}
%

%

%
\def\opn#1#2{\def#1{\operatorname{#2}}} 
%
\opn\chara{char} \opn\length{\ell} \opn\pd{pd} \opn\rk{rk}
\opn\projdim{proj\,dim} \opn\injdim{inj\,dim} \opn\rank{rank}
\opn\depth{depth} \opn\grade{grade} \opn\height{height}
\opn\embdim{emb\,dim} \opn\codim{codim}

\opn\Tr{Tr} \opn\bigrank{big\,rank}
\opn\superheight{superheight}\opn\lcm{lcm}
\opn\trdeg{tr\,deg}
\opn\reg{reg} \opn\lreg{lreg} \opn\ini{in} \opn\lpd{lpd}
\opn\size{size}\opn\bigsize{bigsize}
\opn\cosize{cosize}\opn\bigcosize{bigcosize}
\opn\sdepth{sdepth}\opn\sreg{sreg}
\opn\link{link}\opn\fdepth{fdepth}
%
\opn\div{div} \opn\Div{Div} \opn\cl{cl} \opn\Cl{Cl}
%
%
\opn\Spec{Spec} \opn\Supp{Supp} \opn\supp{supp} \opn\Sing{Sing}
\opn\Ass{Ass} \opn\Min{Min}\opn\Mon{Mon} \opn\dstab{dstab} \opn\astab{astab}
\opn\Syz{Syz}
%
%
\opn\Ann{Ann} \opn\Rad{Rad} \opn\Soc{Soc}
%
%
\opn\Im{Im} \opn\Ker{Ker} \opn\Coker{Coker} \opn\Am{Am}
\opn\Hom{Hom} \opn\Tor{Tor} \opn\Ext{Ext} \opn\End{End}
\opn\Aut{Aut} \opn\id{id}

\opn\nat{nat}
\opn\pff{pf}
\opn\Pf{Pf} \opn\GL{GL} \opn\SL{SL} \opn\mod{mod} \opn\ord{ord}
\opn\Gin{Gin} \opn\Hilb{Hilb}\opn\sort{sort}
%
%
\opn\aff{aff} \opn\con{conv} \opn\relint{relint} \opn\st{st}
\opn\lk{lk} \opn\cn{cn} \opn\core{core} \opn\vol{vol}
\opn\link{link} \opn\star{star}\opn\lex{lex}
\opn\gr{gr}

%
%

\def\pot#1#2{#1[\kern-0.28ex[#2]\kern-0.28ex]}

%
%
\opn\dirlim{\underrightarrow{\lim}}
\opn\inivlim{\underleftarrow{\lim}}
%
%
%

\let\dirsum=\oplus

\let\iso=\cong

\let\Dirsum=\bigoplus

%
%
\let\to=\rightarrow

\def\Implies{\ifmmode\Longrightarrow \else
        \unskip${}\Longrightarrow{}$\ignorespaces\fi}
\def\implies{\ifmmode\Rightarrow \else
        \unskip${}\Rightarrow{}$\ignorespaces\fi}
\def\iff{\ifmmode\Longleftrightarrow \else
        \unskip${}\Longleftrightarrow{}$\ignorespaces\fi}

\let\:=\colon
\newtheorem{Theorem}{Theorem}[section]
 
 \newtheorem{Corollary}[Theorem]{Corollary}
 \newtheorem{Proposition}[Theorem]{Proposition}
 \newtheorem{Remark}[Theorem]{Remark}
 
 \newtheorem{Example}[Theorem]{Example}
 
 \newtheorem{Definition}[Theorem]{Definition}

%
%
\let\epsilon\varepsilon
\let\kappa=\varkappa
%
%
\textwidth=15cm \textheight=22cm \topmargin=0.5cm
\oddsidemargin=0.5cm \evensidemargin=0.5cm \pagestyle{plain}
%
%
\def\qed{\ifhmode\textqed\fi
      \ifmmode\ifinner\quad\qedsymbol\else\dispqed\fi\fi}
\def\textqed{\unskip\nobreak\penalty50
       \hskip2em\hbox{}\nobreak\hfil\qedsymbol
       \parfillskip=0pt \finalhyphendemerits=0}
\def\dispqed{\rlap{\qquad\qedsymbol}}

%
\opn\dis{dis}
\def\pnt{{\raise0.5mm\hbox{\large\bf.}}}

\opn\Lex{Lex}



\begin{document}
 \title {Anticanonical modules of Segre products}

\author {Viviana Ene, J\"urgen Herzog, and Dumitru I.\ Stamate}

\address{Viviana Ene, Faculty of Mathematics and Computer Science, Ovidius University, Bd.\ Mamaia 124,
 900527 Constanta, Romania} \email{vivian@univ-ovidius.ro}

\address{J\"urgen Herzog, Fachbereich Mathematik, Universit\"at Duisburg-Essen, Campus Essen, 45117
Essen, Germany} \email{juergen.herzog@uni-essen.de}

\address{Dumitru I. Stamate, ICUB/Faculty of Mathematics and Computer Science, University of Bucharest, Str. Academiei 14, Bucharest -- 010014, Romania}
\email{dumitru.stamate@fmi.unibuc.ro}

\dedicatory{Dedicated to Dorin Popescu at his $70$th birthday}

\thanks{}
\subjclass[2010]{Primary 13H10; Secondary 13C14.}
\keywords{Canonical module, Anticanonical module, Gorenstein rings.}

\begin{abstract}
We compute  the anticanonical module of the Segre product of standard graded $K$-algebras and determine its depth.
 \end{abstract}

 \maketitle

\section*{Introduction}

Let $R$ be a standard graded Cohen--Macaulay $K$-algebra with canonical module $\omega_R$.
If $R$ is Gorenstein, then $\omega_R \cong R$. Otherwise,
if $R$ happens to be generically Gorenstein, then $\omega_R$ may be identified with an ideal of height one.
In this case one may study the powers $\omega_R^a$ with $a\in \ZZ$, and  ask for which integers $a$ the ideal $\omega_R^a$ is  Cohen--Macaulay. This question has been answered in \cite{BRW} for the canonical module of determinantal rings.

The ideal $\omega_R^{-1}$ is commonly called the anticanonical ideal. Correspondingly, for any Cohen--Macaulay $K$-algebra $R$,  one calls  the $R$-dual of $\omega_R$, namely $\omega_R^*=\Hom_R(\omega_R,R)$, the anticanonical module of $R$. This module is of particular interest.
Indeed, assume as before that $R$ is generically Gorenstein.
Let $S$ be  a Noether normalization  of $R$ and $\sigma: Q(R)\to Q(S)$ a trace map, where
$Q(S)$ and $Q(R)$ denote the total ring of fractions of $S$ and $R$, respectively. Then   the complementary module $\mathfrak{C}^\sigma_{R/S}$ is defined and it is shown in \cite[Satz 7.20]{HK} that $\mathfrak{C}^\sigma_{R/S}$ is isomorphic to the cano\-nical ideal of $R$. Its inverse is the Dedekind different $\mathfrak{D}^\sigma(R/S)$. In number theory this invariant  encodes the ramification  of the corresponding field extension, and by definition it is isomorphic to the anticanonical ideal of $R$.

The anticanonical ideal appears as well  in other homological contexts. As observed in \cite[Lemma 2.1]{HSt}, $\omega_R^{-1}\omega_R$ describes the non-Gorenstein locus of $R$.  Thus, if $R$ is Gorenstein on the punctured spectrum of $R$, then  $\height (\omega_R^{-1}\omega_R)=\dim R$. On the other hand, the anticanonical module  is isomorphic to an ideal of height one. It  may be a Cohen--Macaulay ideal or not, while $\omega_R$ is always Cohen--Macaulay.

In this paper we study the anticanonical module of the Segre product of standard graded Cohen--Macaulay $K$-algebras.
One motivation for us to study this question came from the study of Hibi rings \cite{Hibi}.
Their canonical ideals are well understood,  while at present this is not the case for their anticanonical ideals.
The Hibi ring of a sum of finite posets is just the Segre product of the Hibi rings of each  summand.
Thus if we understand how to compute the anticanonical module of Segre products, then for the study of the  anticanonical ideal of a Hibi ring one may restrict oneself to the case that the underlying poset is not a sum of two posets.

Our study of the canonical and anticanonical module of Segre products is based on results from the fundamental
 paper ``On graded rings, I'' by Goto and Watanabe \cite{GW}. There it is shown \cite[Theorem 4.3.1]{GW} that if $R$ and $S$ are  two
standard graded algebras with $\dim R, \dim S\geq 2$ and $T=R\sharp S$, then $\omega_T=\omega_R\sharp\omega_S$.
One would then expect that $\omega_T^\ast \cong \omega_R^\ast \sharp \omega_S^\ast$, and of course this would be true if,  more generally,   for any finitely generated graded $R$-module $M$  and any finitely generated graded $S$-module $N$, it would follow that $(M\sharp N)^\ast\iso M^\ast\sharp N^\ast$.
Example \ref{ex:notfriendly} shows that this is not always the case.
Therefore,  in the first section of this paper we introduce  friendly families of  algebras.
We say that the family of standard graded $K$-algebras $R_1, \dots, R_m$ is friendly if the natural map
$$
\alpha:R_1(a_1)^\ast \sharp\cdots \sharp R_m(a_m)^\ast \to (R_1(a_1)\sharp\cdots \sharp R_m(a_m))^\ast
$$
is an isomorphism for all integers $a_1,\dots, a_m$. When $m=2$ we say that $(R_1, R_2)$ is a friendly pair.
We show in  Theorem \ref{commute}, that if $R_1, \dots, R_m$ is a friendly family, then $(M_1\sharp \cdots \sharp M_m)^\ast\iso M_1^\ast\sharp \cdots \sharp M_m^\ast$ when
$M_i$ is a finitely generated graded $R_i$-module, $i=1, \dots, m$.
We also prove in this section that if $R_1, \dots, R_m$ are standard graded toric rings with
 $\depth R_i \geq 2$  for $i=1, \dots, m,$  then $R_1, \dots, R_m$ is a friendly family, see Theorem~\ref{ToricSegre}.

We apply these results in Section \ref{sec:anticanonical} to conclude that for any pair $(R,S)$ of standard graded Cohen--Macaulay toric rings with  $\dim R,\dim S\geq 2$,
we have the desired isomorphism  $\omega_T^\ast \cong \omega_R^\ast \sharp \omega_S^\ast$, where  $T=R\sharp S$.
This follows from Corollary~\ref{anticanonic}, where this result is formulated more generally for friendly pairs.
In particular, if we assume that $R$ and $S$ are Gorenstein rings of dimension $\geq 2$ and with negative $a$-invariant,
we deduce from this result  in Proposition~\ref{reflexiv} that the canonical module of the Segre product $T=R\sharp S$ is reflexive,
 which in turn,  as a consequence of \cite[Theorem 7.31]{HK},  implies  that the localization $T_P$  is Gorenstein for any height $1$ prime ideal $P$ of $T.$

The concept of a friendly family can be extended to positively graded or multigraded algebras. Results similar to Theorems \ref{commute} and \ref{ToricSegre} may be formulated in that generality. In view of later applications in Section \ref{sec:anticanonical} we restricted to the standard graded case.

The next part of Section \ref{sec:anticanonical} is devoted to determine the depth of the
anticanonical module for $R\sharp S$ when $(R,S)$ is a friendly pair of standard graded Cohen--Macaulay $K$-algebras.
Inspired by Proposition~4.2.2 in \cite{GW}, we determine,  in terms of $a$ and $b$,  the depth of $R(a)\sharp S(b)$ when $R$ and $S$ are standard graded Gorenstein $K$-algebras,
 and use this result in Corollary~\ref{CM} to characterize those pairs of friendly Gorenstein $K$-algebras for which the anticanonical module of the Segre product $R\sharp S$ is Cohen--Macaulay.

In Theorem~\ref{mfactors-2}  we generalize Corollary~\ref{CM} to Segre products of  finitely many Gorenstein algebras.
Namely,  given $R_1, \dots, R_m$ a friendly 	family of standard graded Gorenstein  $K$-algebras of dimension at least two, with $-\rho_i$ denoting the $a$--invariant of $R_i$ for $i=1,\dots, m$,
Theorem \ref{mfactors-2} describes for which integers $a$   the module
$$
T^{\{a\}}=R_1(-a\rho_1)\sharp\cdots \sharp R_m(-a\rho_m)$$
of uniform twists of the canonical module of $T=\sharp_{i=1}^m R_i$
 is a Cohen-Macaulay $T$-module.
In particular, assuming $\rho_1\geq \cdots \geq \rho_m$,  the anticanonical module $\omega_T^\ast$ is Cohen-Macaulay if and only if
\[2^{m-1}\rho_m>2^{m-2}\rho_{m-1}>\cdots> 2\rho_2>\rho_1,\]
	see Corollary \ref{cor:cm-chain}.

In Proposition \ref{prop:cm-twists} we see that if all $\rho_i$'s are positive and   not all equal (i.e. $T$ is Cohen-Macaulay, and not a Gorenstein ring),
then the set of $a$'s such that $T^{\{a\}}$ is Cohen-Macaulay represents a bounded interval which is explicitly described.

If we assume, moreover, that $\otimes_{i=1}^m R_i$ is a domain, which is for instance the case when the $R_i$'s are toric rings, then the canonical module and its dual
 may be identified with ideals in $T$.
In Proposition \ref{prop:powers-cm}, which is the last result in this paper, we describe  which powers  $\omega_T^a$ are Cohen-Macaulay, using the previous Proposition~\ref{prop:cm-twists}.


\section{Friendly families of standard graded algebras}
\label{sec:friendly}

Let $R$ be a standard graded $K$--algebra, where $K$ is a field, and $M$ a finitely generated graded $R$--module. Then the dual of $M,$
$M^\ast=\Hom_R(M,R)$ is graded by $M^\ast=\dirsum_{i\in \ZZ} \Hom_R^i (M,R)$ where
\[\Hom_R^i(M,R)=\{\varphi\in \Hom_R(M,R): \varphi(M_k)\subseteq R_{k+i} \text{ for all }k\}.\]
In what follows, we will  also consider the graded $K$--dual of $M,$ namely \[M^\vee=\Hom_K(M,K)=\dirsum_{i\in \ZZ}\Hom_K(M_{-i},K).\]

Let $S$ be another standard graded $K$--algebra and $N$ a finitely generated graded $S$--module.
Let $T=R\sharp S$ be the Segre product of $R$ and $S.$
The algebra $T$ is also standard graded with the grading $T=\dirsum_{i\geq 0} (R_i\otimes_K S_i).$ The Segre product $M\sharp N$ is a graded $T$--module with
$M\sharp N=\dirsum_{j\in \ZZ} (M_j \otimes_K N_j).$

There is a natural homogeneous $T$--module homomorphism
$\alpha: M^\ast \sharp N^\ast \to (M\sharp N)^\ast$ defined as follows. An element of $(M^\ast \sharp N^\ast)_i =M_i^\ast \otimes N_i^\ast$
is    a sum of elements of the form $\varphi\otimes \psi$ with $\varphi \in M_i^\ast, \psi\in N_i^\ast.$
Here $M^\ast_i$ denotes
the $i$-th homogeneous component of $M^\ast.$ Then
$\alpha(\varphi \otimes \psi)\in (M\sharp N)^\ast_i$ is the map $(M\sharp N)_k\to R_{k+i}\otimes S_{k+i}$ given by
$m\otimes n \mapsto \varphi(m)\otimes \psi(n).$

We can iterate this construction. In general, given $R_1, \dots, R_m$ standard graded algebras, and   $M_i$ a graded  $R_i$-module, $i=1, \dots, m$, we can construct a natural map
$$
\alpha:M_1^\ast \sharp \cdots \sharp M_m^\ast \to (M_1\sharp\cdots \sharp M_m)^\ast.
$$

\begin{Definition}
\label{def:friendly-family}
{\em
The family of algebras $R_1,\dots, R_m$ is called   {\em friendly} if the natural map
$$
\alpha:R_1(a_1)^\ast \sharp \cdots \sharp R_m(a_m)^\ast \to (R_1(a_1)\sharp\cdots \sharp R_m(a_m))^\ast
$$
is an isomorphism for all integers $a_1, \dots, a_m$.

In case $m=2$ we say that $(R_1, R_2)$ is a friendly pair.
}
\end{Definition}

Not all families of standard graded $K$--algebras are  friendly, as the following example shows.

\begin{Example}
\label{ex:notfriendly}
{\em
%
Let $x$ and $y$ be indeterminates over $K$ and let $R=K[x]/(x^3)$ and $S=K[y]/(y^2)$. 
Then $R\sharp S= K \dirsum Kxy$. One has $R(2)\sharp S(1) = Kx \dirsum Kx^2y$, with $x$ sitting  in degree $-1$, 
therefore $R(2)\sharp S(1)\cong  (R\sharp S)(1)$. On the other hand, $R(2)^* \sharp S(1)^* \cong R(-2)\sharp S(-1) =Ky$.

Thus $(R(2)\sharp S(1))^*\cong (R\sharp S)(-1)$ has two nonzero components, while $R(2)^* \sharp S(1)^*$ has only one nonzero component. Hence these two modules cannot be isomorphic and $(R,S)$ is not a friendly pair.
}
\end{Example}

\medskip
However, we show in Theorem \ref{ToricSegre} that any finite collection of standard graded toric rings is friendly.
The following result will be useful to that purpose. At the same time, Theorem \ref{commute} gives a wider class of modules for which the map $\alpha$ used before is an isomorphism.

\begin{Theorem}
\label{commute}
Let $R_1, \dots, R_m$ be a friendly family of standard graded $K$-algebras, and $T=\sharp_{i=1}^m R_i$.
If $M_i$ is a finitely generated graded $R_i$-module for $i=1, \dots, m$, then the natural map
$\alpha: \sharp_{i=1}^m M_i^{\ast} \to (\sharp_{i=1}^m M_i)^\ast$ is an isomorphism of graded $T$-modules.
\end{Theorem}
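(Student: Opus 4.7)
The plan is to reduce the claim to the friendly assumption by taking finite free presentations of each $M_i$ and comparing two naturally arising left-exact sequences via a four-lemma. First I would observe that the Segre product $\sharp$ is exact in each variable separately, since degreewise it is $\tensor_K$, which is exact; moreover $\sharp$ commutes with finite direct sums. Together with the friendly hypothesis and the fact that $(-)^\ast$ also commutes with finite direct sums, this implies that $\alpha$ is already an isomorphism whenever each $M_i$ is a finite direct sum of shifts of $R_i$: both sides decompose into the same direct sum indexed by tuples of summands, and on each piece $\alpha$ is an isomorphism by Definition~\ref{def:friendly-family}.

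Next I would choose finite presentations $F_i^{(1)}\to F_i^{(0)}\to M_i\to 0$ by finitely generated graded free $R_i$-modules. Using right-exactness of the Segre product in each variable (together with the elementary fact that a tensor product of quotients of vector spaces is the quotient by the sum of the corresponding images), one obtains an exact sequence
\begin{equation*}
\Dirsum_{j=1}^m F_1^{(0)}\sharp\cdots\sharp F_j^{(1)}\sharp\cdots\sharp F_m^{(0)} \to F_1^{(0)}\sharp\cdots\sharp F_m^{(0)} \to M_1\sharp\cdots\sharp M_m \to 0.
\end{equation*}
Applying $\Hom_T(-,T)$ yields the left-exact sequence
\begin{equation*}
0\to (M_1\sharp\cdots\sharp M_m)^\ast \to (F_1^{(0)}\sharp\cdots\sharp F_m^{(0)})^\ast \to \Dirsum_{j=1}^m (F_1^{(0)}\sharp\cdots\sharp F_j^{(1)}\sharp\cdots\sharp F_m^{(0)})^\ast.
\end{equation*}

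On the other side, I would dualize each individual presentation to obtain left-exact sequences $0\to M_i^\ast\to (F_i^{(0)})^\ast\to (F_i^{(1)})^\ast$. Using left-exactness of the Segre product in each variable, together with the identity
\begin{equation*}
A_1\sharp\cdots\sharp A_m = \Sect_{j=1}^m B_1\sharp\cdots\sharp A_j\sharp\cdots\sharp B_m
\end{equation*}
inside $B_1\sharp\cdots\sharp B_m$, valid for any inclusions $A_j\subseteq B_j$ of graded modules (this reduces degreewise to the vector-space fact that $W_1\tensor\cdots\tensor W_m=\Sect_j V_1\tensor\cdots\tensor W_j\tensor\cdots\tensor V_m$, verified by choosing complements $V_j=W_j\dirsum W_j'$), one derives
\begin{equation*}
0\to M_1^\ast\sharp\cdots\sharp M_m^\ast \to (F_1^{(0)})^\ast\sharp\cdots\sharp (F_m^{(0)})^\ast \to \Dirsum_{j=1}^m (F_1^{(0)})^\ast\sharp\cdots\sharp (F_j^{(1)})^\ast\sharp\cdots\sharp (F_m^{(0)})^\ast.
\end{equation*}

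Finally, these two left-exact sequences fit into a commutative diagram whose vertical arrows are the natural maps $\alpha$. By the first paragraph the middle and right verticals are isomorphisms, so the four-lemma for left-exact sequences forces the leftmost $\alpha\colon M_1^\ast\sharp\cdots\sharp M_m^\ast\to (M_1\sharp\cdots\sharp M_m)^\ast$ to be an isomorphism as well. The main technical step will be establishing the intersection identity in the third paragraph; the remainder amounts to bookkeeping with $\alpha$, right-exactness of $\sharp$ in each variable, and left-exactness of $(-)^\ast$.
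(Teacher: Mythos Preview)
Your argument is correct, but it differs from the paper's in its organization. The paper proceeds by induction on the number of indices $i$ for which $M_i$ is not free: at each step one non-free factor, say $M_1$, is replaced by a presentation $G\to F\to M_1\to 0$, and one compares the two left-exact sequences obtained by either first dualizing and then applying $-\sharp(\sharp_{i>1}M_i)$, or first applying $-\sharp(\sharp_{i>1}M_i)$ and then dualizing. Since only \emph{one} variable changes, ordinary exactness of $\sharp$ in a single slot suffices, and the inductive hypothesis handles the middle and right vertical maps (where $M_1$ has been replaced by the free modules $F$ and $G$).

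Your approach instead resolves all the $M_i$ simultaneously. This avoids the induction but introduces the need for the intersection identity $A_1\sharp\cdots\sharp A_m=\Sect_j B_1\sharp\cdots\sharp A_j\sharp\cdots\sharp B_m$ (and the dual surjection statement), which you correctly reduce to elementary linear algebra over $K$ by working degreewise. The trade-off is that the paper's inductive scheme is slightly more economical---it never needs these multi-factor identities, only the one-variable exactness of $\sharp$ and of $(-)^\ast$---whereas your argument is more direct and in a single shot. Both approaches ultimately rest on the same base case (all factors free, hence covered by the friendly hypothesis plus compatibility with finite direct sums) and conclude with the same diagram chase.
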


\begin{proof}
The statement is proved by induction on $$r=|\{i:  M_i \text { is not free}, i=1, \dots, m \}|.$$

If $r=0$, by the definition of a friendly family and taking into account that the dual functor and the Segre product commute with finite direct sums,
it follows that $\alpha$ is an isomorphism.

Let $r>0$ and assume that the conclusion of the theorem holds when at most $r-1$ of the $M_i$'s are not free.
Without loss of generality we may assume $M_1$ is not free. Let
\begin{equation}
\label{eq:presentation}
G\to F\to M_1 \to 0
\end{equation}
be a presentation of $M_1$ by finitely generated graded free $R_1$-modules.  We dualize \eqref{eq:presentation} and then apply the exact functor $-\sharp(\sharp_{i>1}M_i)$ to obtain the
exact sequence
\begin{equation}
\label{eq:pres1}
0\to \sharp_{i=1}^m M_i^\ast \to F^\ast \sharp (\sharp_{i>1}M_i) \to G^\ast \sharp (\sharp_{i>1}M_i).
\end{equation}

Similarly, if we first apply $-\sharp(\sharp_{i>1}M_i)$ to \eqref{eq:presentation} and then we dualize, we have the exact sequence
\begin{equation}
\label{eq:pres2}
0\to (\sharp_{i=1}^m M_i)^\ast \to (F\sharp (\sharp_{i>1}M_i))^\ast \to (G\sharp (\sharp_{i>1}M_i))^\ast.
\end{equation}

The maps $\alpha$ induce  a  map between the chains \eqref{eq:pres1} and \eqref{eq:pres2}. By the inductive hypothesis, the two rightmost maps $\alpha$ are
isomorphisms, hence using the $5$-Lemma (\cite[Exercise 1.3.3]{Weibel}) we get 
that the leftmost map $\alpha$ is an isomorphism, as well.
This finishes the proof.
\end{proof}

Let us make a simple remark before proceeding to the main result of this section.

\begin{Remark}\label{shiftS}{\em
Let $R$ and $S$ be standard graded algebras such that  the natural map
\[
\alpha: R(a)^\ast \sharp S \to (R(a)\sharp S)^\ast  
\]
  is an isomorphism  for all $ a\in \ZZ$.
Then
\begin{eqnarray*}
(R(a)\sharp S(b))^\ast &=& ((R(a-b)\sharp S)(b))^\ast= (R(a-b)\sharp S)^\ast (-b) \\
&\cong& ((R(a-b))^\ast \sharp S)(-b)= (R(b-a)\sharp S)(-b) \\
&=& R(-a)\sharp S(-b) \\
&=& R(a)^\ast \sharp S(b)^\ast \text{ \quad for all } a,b\in \ZZ.
\end{eqnarray*}
Thus  $(R,S)$ is a friendly pair in the sense of Definition \ref{def:friendly-family}.
}
\end{Remark}

We note that, by \cite[Corollary 16]{KR},
the Segre product of two standard  graded toric rings $R, S$ is a standard graded toric ring, as well.
For the convenience of the reader we include a short proof of this statement.

\begin{Proposition}\label{Toric}
Tensor products and Segre products of standard graded toric rings are standard graded toric rings, too. In particular, they are domains.
\end{Proposition}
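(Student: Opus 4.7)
The plan is to realize the tensor product $R\otimes_K S$ and the Segre product $R\sharp S$ as explicit monomial subalgebras of a polynomial ring. The conclusions then follow at once, since any monomial subalgebra of a polynomial ring is a toric ring and, as a subring of a polynomial ring, is in particular a domain. By induction on the number of factors it suffices to treat two factors at a time.

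First, I would fix presentations $R=K[f_1,\dots,f_n]\subseteq K[t_1,\dots,t_p]$ and $S=K[g_1,\dots,g_m]\subseteq K[s_1,\dots,s_q]$, where the $f_i$ are monomials in the $t$'s of one common total degree and the $g_j$ are monomials in the $s$'s of one common total degree; such presentations exist precisely because $R$ and $S$ are standard graded toric rings. Since the variable sets are disjoint, the canonical $K$-algebra map $K[t]\otimes_K K[s]\to K[t,s]$ is an isomorphism. Restricting it to $R\otimes_K S$, the image is the monomial subalgebra $A:=K[f_1,\dots,f_n,g_1,\dots,g_m]\subseteq K[t,s]$. The one point that must be verified is that this restricted map remains injective; this follows because it is the tensor product over the field $K$ of the two injections $R\hookrightarrow K[t]$ and $S\hookrightarrow K[s]$, and tensoring with a $K$-vector space is exact. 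Assigning each $f_i$ and each $g_j$ the degree $1$ extends the standard gradings of $R$ and $S$ to a standard grading on $R\otimes_K S\iso A$.

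For the Segre product I would use the identification just obtained. Inside $R\otimes_K S\iso A$, the degree-$k$ component of $R\sharp S$ maps onto the $K$-span of the monomials $f_{i_1}\cdots f_{i_k}\cdot g_{j_1}\cdots g_{j_k}$. Hence $R\sharp S$ is isomorphic to the monomial subalgebra $K[\,f_i g_j : 1\leq i\leq n,\; 1\leq j\leq m\,]\subseteq K[t,s]$, which is again a toric ring and a domain. The grading in which each generator $f_i g_j$ is placed in degree $1$ is precisely the Segre grading, so $R\sharp S$ is standard graded.

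The only substantive technical point is the injectivity used in the tensor product identification, which is handled cleanly by flatness over the field $K$; the rest is bookkeeping about monomials and degrees, and the iteration to arbitrarily many factors is a straightforward induction.
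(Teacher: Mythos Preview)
Your proof is correct and reaches the same destination as the paper, but by a different and more elementary route for the key step. To show that $R\otimes_K S$ is toric, the paper works on the presentation side: it writes the defining ideals of $R$ and $S$ as lattice ideals $I_{L_1},I_{L_2}$, forms the block-diagonal lattice $L=L_1\oplus L_2$, observes that $(I_{L_1}+I_{L_2})K[x,y]=I_L$, and then invokes the criterion that a lattice ideal is prime precisely when the quotient lattice is torsion-free; this yields $R\otimes_K S\cong K[x,y]/I_L\cong K[C]$. You instead bypass presentations entirely: the natural map $R\otimes_K S\to K[t]\otimes_K K[s]\cong K[t,s]$ is injective because it is the tensor product over a field of two injections, and flatness over $K$ does the rest. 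Your argument needs no lattice-ideal machinery and makes the domain conclusion immediate; the paper's approach, by contrast, produces as a byproduct an explicit description of the toric ideal of $R\otimes_K S$. Once the tensor product is identified with a monomial algebra, both proofs handle the Segre product in the same way, reading off $R\sharp S\cong K[f_ig_j]$ inside it. One minor remark: your phrase ``of one common total degree'' is not actually used anywhere and is not automatic for an arbitrary toric embedding; all you really need (and use) is that each $f_i$ and each $g_j$ has degree~$1$ in the respective standard grading.
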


\begin{proof}
Let $R=K[t^{a_1},\ldots,t^{a_n}]=K[A]$ where $A$ is an integer matrix with column vectors $a_1,\ldots,a_n$ and
$S=K[s^{b_1},\ldots,s^{b_m}]=K[B]$ where $B$ is an integer matrix with column vectors $b_1,\ldots,b_m.$ Let $I$ be the kernel of the
morphism $K[x_1,\ldots,x_n]\to K[A]$ induced by $x_i\mapsto t^{a_i}$ for $1\leq i\leq n,$ and $J$ the kernel of the morphism
$K[y_1,\ldots,y_m]\to K[B]$ induced by $y_j\mapsto s^{b_j}$ for $1\leq j\leq m.$ Then
\[
R\otimes _K S\cong K[x,y]/IK[x,y]+JK[x,y]
\] where $K[x,y]$ denotes the polynomial ring in the variables $x_1,\ldots,x_n,y_1,\ldots,y_m.$

Let $L_1,L_2$ be the following lattices in $\ZZ^n$: $L_1=\{c\in \ZZ^n: Ac=0\}$ and $L_2=\{d\in \ZZ^m: Bd=0.\}$ Then
$I=I_{L_1}$ and $J=I_{L_2}$ where $I_{L_j}$ is the lattice ideal of $L_j$ for $j=1,2. $

Let \[L=\left\{\left(
\begin{array}{l}
	c\\
	d
\end{array}
\right)\in \ZZ^n\dirsum \ZZ^m: \left(
\begin{array}{cc}
	A & 0\\
	0 & B
\end{array}
\right)\left(
\begin{array}{l}
	c\\
	d
\end{array}
\right)=0\right\}.\]
Then $I_L=(I_{L_1}+I_{L_2})K[x,y].$ As $\ZZ^n/L_1$ and $\ZZ^m/L_2$ are torsion free,  $\ZZ^n\dirsum \ZZ^m/I_L$ is torsion free as well, hence $I_L$ is a prime ideal (\cite[Theorem~8.2.2]{Vi} or \cite[Theorem~2.1]{Ei}) and
\[R\otimes_K S\cong K[x,y]/I_L=K[C],
\]
where $C=\left(
\begin{array}{cc}
	A & 0\\
	0 & B
\end{array}
\right).$

This implies that the morphism $R\otimes_K S\to K[t^{a_1},\ldots,t^{a_n},s^{b_1},\ldots,s^{b_m}]=K[C]$ induced by
$t^{a_i}\otimes 1\mapsto t^{a_i}$ and $1\otimes s^{b_j}\mapsto s^{b_j}$ is an isomorphism. Since $R$ and $S$ are standard graded, we get
\[
R\sharp S =K[t^{a_i}s^{b_j}:1\leq i\leq n, 1\leq j\leq m].
\]
\end{proof}

\begin{Theorem}\label{ToricSegre}
Any finite family of standard graded toric rings of   depth at least two is friendly.
\end{Theorem}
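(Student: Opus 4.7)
The plan is to induct on $m$, reducing to the case of a friendly pair, and to treat that pair case by passing to sheaves on $\operatorname{Proj} T$. For the inductive step, assuming $R_1, \dots, R_{m-1}$ is friendly, set $R' = R_1 \sharp \cdots \sharp R_{m-1}$. By Proposition \ref{Toric}, $R'$ is again a standard graded toric ring; and by iterated application of the Goto--Watanabe local cohomology formula for Segre products \cite{GW}, $\depth R' \geq 2$. Applying Theorem \ref{commute} to the friendly family $R_1, \dots, R_{m-1}$ with the modules $R_i(a_i)$ gives $(R_1(a_1) \sharp \cdots \sharp R_{m-1}(a_{m-1}))^\ast \iso R_1(a_1)^\ast \sharp \cdots \sharp R_{m-1}(a_{m-1})^\ast$ as $R'$-modules. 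A further application of Theorem \ref{commute} to the pair $(R', R_m)$, with the $R'$-module $R_1(a_1) \sharp \cdots \sharp R_{m-1}(a_{m-1})$ and the $R_m$-module $R_m(a_m)$, then yields the desired isomorphism for $R_1, \dots, R_m$. Thus the problem reduces to showing that every pair $(R, S)$ of standard graded toric rings of depth $\geq 2$ is friendly.

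For the pair case, set $T = R \sharp S$. By Remark \ref{shiftS} it suffices to show that $\alpha\colon R(a)^\ast \sharp S \to (R(a) \sharp S)^\ast$ is an isomorphism for every $a \in \ZZ$. Since $R, S$, and hence $T$, are toric domains (Proposition \ref{Toric}), $\operatorname{Proj} T$ identifies via Segre with $\operatorname{Proj} R \times_K \operatorname{Proj} S$, and the $T$-module $R(a) \sharp S$ sheafifies to the invertible sheaf $\mathcal{O}_{\operatorname{Proj} R}(a) \boxtimes \mathcal{O}_{\operatorname{Proj} S}$. Hence both $R(a)^\ast \sharp S = R(-a) \sharp S$ and $(R(a) \sharp S)^\ast$ sheafify canonically to $\mathcal{O}_{\operatorname{Proj} R}(-a) \boxtimes \mathcal{O}_{\operatorname{Proj} S}$, and the sheafified map $\widetilde{\alpha}$ is the canonical identification of these two descriptions of the same line bundle, so it is an isomorphism of sheaves.

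To lift $\widetilde{\alpha}$ back to an isomorphism of modules, I would show that both sides of $\alpha$ have depth at least two as $T$-modules, so that each equals the graded module of global sections of its sheafification via the standard four-term exact sequence relating a module to its local cohomology at $\mm$. Since $\depth T \geq 2$, dualizing a finite free presentation exhibits $(R(a) \sharp S)^\ast$ as a second syzygy of depth-two free modules, forcing $\depth (R(a) \sharp S)^\ast \geq 2$. For the source $R(-a) \sharp S$, the Goto--Watanabe formula applied to the rank-one free modules $R(-a)$ and $S$, together with $\depth R, \depth S \geq 2$, yields the vanishing of $H^0_\mm$ and $H^1_\mm$. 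The main technical point will be verifying this depth estimate for the source uniformly in $a$, since this is where the full strength of the hypothesis $\depth R, \depth S \geq 2$ is consumed.
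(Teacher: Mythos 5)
Your reduction to the pair case matches the paper's (the paper sidesteps verifying $\depth(R_1\sharp\cdots\sharp R_{m-1})\geq 2$ by proving an asymmetric base case in which only \emph{one} of the two factors is required to have depth at least two, but your direct verification of $\depth R'\geq 2$ via the Goto--Watanabe formula is equally valid). For the pair case itself, however, you take a genuinely different route. The paper stays entirely inside the monomial world: it realizes $R(-a)\sharp S$ as the ideal $J=(f\otimes S_a)T$ for a monomial $f\in R_a$, identifies $(R(-a)\sharp S)^\ast$ up to shift with the fractionary monomial ideal $J^{-1}$, and computes $J^{-1}=f^{-1}(R(a)\sharp S)$ by hand; the toric hypothesis is used essentially there (to have a domain, to reduce to monomial elements, and to invoke $U^{-1}\cong S$ for the grade-two ideal $U$ generated by $S_a$). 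Your argument instead passes to $\operatorname{Proj} T\cong \operatorname{Proj} R\times_K\operatorname{Proj} S$, observes that $\widetilde\alpha$ is locally the canonical isomorphism $L^\ast\otimes L'^\ast\cong (L\otimes L')^\ast$ of rank-one free modules, and recovers $\alpha$ from $\widetilde\alpha$ via the four-term local-cohomology/global-sections sequence once both sides are shown to have depth at least two --- the target because it is a second syzygy over $T$ and $\depth T\geq 2$, the source by the Goto--Watanabe K\"unneth formula, whose vanishing in cohomological degrees $0$ and $1$ is unaffected by the twist $a$; so the ``main technical point'' you defer does go through uniformly in $a$. Notably, your proof nowhere uses that $R$ and $S$ are toric, so it would establish the stronger statement that \emph{any} finite family of standard graded $K$-algebras of depth at least two is friendly; what the paper's more elementary argument buys in exchange is a concrete description of $(R(-a)\sharp S)^\ast$ as an explicit fractionary ideal, in the computational spirit of the rest of the paper, and it avoids importing the Proj/Segre sheaf machinery.
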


\begin{proof}
For the proof we make a series of reductions.
Firstly, we note that it is enough to prove the statement for friendly pairs, and then argue by induction on the cardinality of the family of algebras.
Indeed, if $R_1, \dots, R_m$ are a friendly family of standard graded toric rings of   depth  at least 2, and  $m\geq 3$, then
for all $a_1, \dots, a_m \in \ZZ$ we have
\begin{eqnarray}
\nonumber (\sharp_{i=1}^m R_i(a_i))^\ast  &\cong& ((\sharp_{i=1}^{m-1} R_i(a_i))\sharp R_m(a_m))^\ast \\
\label{eq:oneiso} &\cong&  (\sharp_{i=1}^{m-1} R_i(a_i))^\ast \sharp R_m(a_m)^\ast \\
\nonumber &\cong&  (\sharp_{i=1}^{m-1} R_i(a_i)^\ast ) \sharp R_m(a_m)^\ast  \\
\nonumber &\cong&  \sharp_{i=1}^{m} R_i(a_i)^\ast.
\end{eqnarray}
  Here, for the isomorphism in \eqref{eq:oneiso} we used Theorem \ref{commute} applied to the algebras
 $\sharp_{i=1}^{m-1}R_i$ and $R_m$ that are toric (cf.  Proposition \ref{Toric}) and form a 
friendly  pair, see the base case which is discussed in the next paragraph.

For the induction step to work we need to prove the base case $m=2$ in a slightly more general setup.
We let $R$ and $S$ be toric standard graded algebras with $\depth R \geq 2$ or $\depth S \geq 2$.
Without loss of generality, we may assume that $\depth S\geq 2$.
According to Remark \ref{shiftS}, for checking that $(R,S)$  is a friendly pair it suffices to show that
 $(R(a)\sharp S)^\ast \cong R(a)^\ast \sharp S^\ast$ or, equivalently, $(R(a)\sharp S)^\ast \cong R(-a)\sharp S$ for all $a\in \ZZ.$
Suppose that we have proved the latter isomorphism for $a>0,$ and let $a<0.$ Then
\[
(R(a)\sharp S)^\ast = ((R\sharp S(-a))(a))^\ast =(R\sharp S(-a))^\ast (-a)\cong (R\sharp S(a))(-a)=R(-a)\sharp S.
\]

Thus, in order to complete the proof, we need to show that if $a>0,$ then \[(R(-a)\sharp S)^\ast \cong R(a)\sharp S. \]

As a $T=R\sharp S$-- module, $R(-a)\sharp S$ is generated by $1\otimes S_a.$ Let $f$ be a monomial in  $R_a$.
Since $R$ is a domain, the multiplication map
$R(-a)\stackrel{\cdot f}{\rightarrow}R$ is injective, and since $\sharp S$ is an exact functor, the induced map $R(-a)\sharp S \to R\sharp
S$ is injective. Thus, the map $R(-a)\sharp S \to R\otimes _K S$ is injective as well. It follows that $R(-a)\sharp S$ is isomorphic to
the ideal $J$ of $T$ generated by $f\otimes S_a.$ Therefore, up to a shift, $(R(-a)\sharp S)^\ast$ is isomorphic to the inverse $J^{-1}$ of $
J.$

Since $J$ is a monomial ideal, it follows that $J^{-1}$ is a fractionary monomial ideal.
Let $x=\frac{g_1\otimes h_1}{g_2\otimes h_2}=\frac{g_1}{g_2}\otimes \frac{h_1}{h_2}\in J^{-1}$ where $g_1,g_2,h_1,h_2$ are monomials
with $\deg g_1=\deg h_1=i$ and $\deg g_2=\deg h_2=j.$ Then $f \frac{g_1}{g_2}\otimes \frac{h_1}{h_2}S_a\subseteq R\sharp S.$
 This implies that  $h_1/h_2\in U^{-1}$, where $U$ denotes the ideal of $S$ generated by its homogeneous
component of degree $a$.  Since   $\depth S\geq 2, $ 
and $U$ is  a primary ideal in $S$ with radical the maximal graded ideal of $S$,  we have $\grade U \geq 2,$
 which implies that   $U^{-1}\cong S$   by \cite[Exercise 1.2.24]{BHbook}. 
Hence $h_1/h_2\in S$ which means $i-j\geq 0.$ Then $f g_1/g_2\in R_{a+i-j}, $ that is, $g_1/g_2\in f^{-1}R_{a+i-j}.$
We obtain $xf\in (R(a)\sharp S)$. Therefore, we have proved that $J^{-1}\subseteq f^{-1}(R(a)\sharp S).$

On the other hand, $J=f(R(-a)\sharp S)$ and $f^{-1}(R(a)\sharp S) f (R(-a)\sharp S)\subseteq R\sharp S,$ which implies that
$f^{-1}(R(a)\sharp S)\subseteq J^{-1}.$ Hence $J^{-1}=f^{-1}(R(a)\sharp S),$ which proves our claim.
\end{proof}


\section{The anticanonical module  and its twists}
\label{sec:anticanonical}

Let $R$ be a standard graded  $K$--algebra of $\dim R=r$ with maximal graded ideal $\mm.$ The module
$\omega_R=(H_{\mm}^r(R))^\vee\cong \Hom_K(H_{\mm}^r(R), K)$ is called the canonical module of $R.$ Here $H_{\mm}^r(R)$ denotes the
$r$-th local cohomology module of $R.$ We refer the reader to \cite{GW} for several properties of $\omega_R.$

Let $\omega_R^{\ast}$ be the dual of $\omega_R,$ that is, $\omega_R^\ast=\Hom_R(\omega_R,R).$ The module $\omega_R^{\ast}$ is called the anticanonical module of $R.$

 When $R$ is a Cohen-Macaulay normal domain with canonical ideal $\omega_R$, it is customary to define the anticanonical
module of $R$ as the element in the divisor class group $\Cl(R)$ which is the inverse to the canonical class $[\omega_R]$.
Then $-[\omega_R]=[\omega_R^{-1}]$ by the way $\Cl(R)$ is defined.
By \cite[Corollary 3.3.19]{BHbook} one has that  $\omega_R$ is a divisorial ideal, i.e. $(\omega_R^{-1})^{-1}=\omega_R$. It follows that $\omega_R^{-1}=((\omega_R^{-1})^{-1})^{-1}$. On the other hand, $((\omega_R^{-1})^{-1})^{-1}$ is a divisorial ideal of $R$, see \cite[Lemma 4.48(c)]{BG}.
This implies that $\omega_R^{-1}$  is a divisorial ideal, as well. We have $\omega_R^{-1}\cong \omega_R^\ast$, since  $\grade \omega_R >0$,  see \cite[Lemma 3.14]{Lindo}. This shows that $-[\omega_R]=[\omega_R^\ast]$ and our definition for the anticanonical module of $R$ agrees with the classical one when $R$ is normal.

\medskip
Theorem~\ref{commute} has  several consequences.

\begin{Corollary}\label{anticanonic}
Let $(R,S)$ be a friendly pair of  standard graded algebras with  $\depth R\geq 2$, $\depth S\geq 2$  and $T=R\sharp S.$ Then
\[
\omega_T^\ast \cong \omega_R^\ast \sharp \omega_S^\ast.
\] In particular, if $S$ is Gorenstein with $a$--invariant $\sigma,$ then
\[
\omega_T^\ast\cong \omega_R^\ast \sharp S(-\sigma).\]
Moreover, if $\omega_R^\ast$ is generated by $w_1,\ldots,w_m$ with $\deg w_i=\rho_i$
for $1\leq i\leq m$ and $\sigma \leq\min_{1\leq i\leq m} \rho_i$, then $\omega_T^\ast$ is generated by $w_i\otimes S_{\rho_i-\sigma},1\leq i\leq m$.
\end{Corollary}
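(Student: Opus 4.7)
The plan is to handle the three claims in sequence. The first formula follows by combining the Goto--Watanabe identification of $\omega_T$ with Theorem~\ref{commute}. The second formula is an immediate substitution once $\omega_S$ is identified in the Gorenstein case. The third claim, about the explicit generators, requires unwinding the Segre product in each homogeneous degree and using standard gradedness of $S$.

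For the first isomorphism, note that $\depth R \geq 2$ and $\depth S \geq 2$ imply $\dim R, \dim S \geq 2$, so \cite[Theorem~4.3.1]{GW} applies and yields $\omega_T \cong \omega_R \sharp \omega_S$. Since $\omega_R$ and $\omega_S$ are finitely generated graded modules over $R$ and $S$ respectively, and $(R,S)$ is a friendly pair, Theorem~\ref{commute} (for $m=2$) delivers
\[
(\omega_R \sharp \omega_S)^\ast \;\cong\; \omega_R^\ast \sharp \omega_S^\ast.
\]
Composing with the first isomorphism gives $\omega_T^\ast \cong \omega_R^\ast \sharp \omega_S^\ast$. When $S$ is Gorenstein with $a$-invariant $\sigma$, we have $\omega_S \cong S(\sigma)$, hence $\omega_S^\ast \cong \Hom_S(S(\sigma),S) \cong S(-\sigma)$, and substitution yields $\omega_T^\ast \cong \omega_R^\ast \sharp S(-\sigma)$.

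For the generating set, I would work with the explicit graded structure. Set $L = \omega_R^\ast \sharp S(-\sigma)$, so that $L_j = (\omega_R^\ast)_j \otimes_K S_{j-\sigma}$ for every $j \in \ZZ$. A typical homogeneous element is a sum of elementary tensors $w \otimes s$ with $w \in (\omega_R^\ast)_j$ and $s \in S_{j-\sigma}$. Since $w_1,\dots,w_m$ generate $\omega_R^\ast$ over $R$, I can write $w = \sum_i r_i w_i$ with $r_i \in R_{j-\rho_i}$ (the summands with $j < \rho_i$ being automatically zero), reducing the problem to writing each $(r_i w_i) \otimes s$ as a $T$-linear combination of the proposed generators $w_i \otimes S_{\rho_i - \sigma}$. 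The hypothesis $\sigma \leq \rho_i$ and the surviving condition $j \geq \rho_i$ give $\rho_i - \sigma \geq 0$ and $j - \rho_i \geq 0$, so standard gradedness of $S$ yields $S_{j-\sigma} = S_{j-\rho_i}\cdot S_{\rho_i - \sigma}$. Decomposing $s = \sum_k s'_k s''_k$ accordingly produces
\[
(r_i w_i) \otimes s \;=\; \sum_k (r_i \otimes s'_k)\,(w_i \otimes s''_k),
\]
which exhibits it as a $T$-module combination of elements in $w_i \otimes S_{\rho_i - \sigma}$, as required.

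I do not expect a genuine obstacle; the proof is essentially bookkeeping. The only delicate point is making sure the factorization of $s$ through a piece of degree $\rho_i - \sigma$ is available, which is precisely where the assumption $\sigma \leq \min_i \rho_i$ enters. The underlying input --- the compatibility of Segre product with duals for friendly pairs --- has already been secured by Theorem~\ref{commute}.
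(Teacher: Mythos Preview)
Your proof is correct and follows the same route as the paper: invoke \cite[Theorem~4.3.1]{GW} to get $\omega_T\cong\omega_R\sharp\omega_S$, then apply Theorem~\ref{commute} to dualize, and substitute $\omega_S\cong S(\sigma)$ in the Gorenstein case. The paper's own proof is in fact terser and does not spell out the argument for the generating set; your treatment of that ``Moreover'' clause via the factorization $S_{j-\sigma}=S_{j-\rho_i}\cdot S_{\rho_i-\sigma}$ is a correct and welcome addition.
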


\begin{proof}
By \cite[Theorem 4.3.1]{GW}, we have $\omega_T\cong \omega_R\sharp\omega_S.$ Hence, the desired isomorphism follows by applying Theorem~\ref{commute}.
The second isomorphism follows since $\omega_S=S(\sigma)$ if $S$ is Gorenstein with $a$--invariant $\sigma.$
\end{proof}

\begin{Remark}
\label{rem:anti-noniso}
{\em
With notation as in Corollary \ref{anticanonic}, it is not true in general that $\omega_T^\ast$ and $\omega_R^\ast \sharp \omega_S^\ast$ are isomorphic. Indeed, for the algebras $R=K[x]/(x^3)$ and $S=K[y]/(y^2)$ in Example \ref{ex:notfriendly}
one has $\omega_R\cong R(2)$, $\omega_S\cong S(1)$, hence $\omega_R^\ast \sharp \omega_S^\ast = R(-2)\sharp S(-1)=Ky$.
We note that the ring $R\sharp S$ is graded isomorphic with the ring $S$, hence $\omega_T^*$ has two nonzero components. This shows that  $\omega_T^\ast$ and $\omega_R^\ast \sharp \omega_S^\ast$ are not isomorphic.
}
\end{Remark}

\begin{Corollary}\label{Gor}
Let $(R,S)$ be a pair of friendly standard graded algebras and $T=R\sharp S.$ Assume that $R,S$ are Gorenstein of $a$-invariants $\rho,$ respectively, $\sigma$, and $\dim R,$ $ \dim S\geq 2.$ Then $\omega_T^\ast \cong R(-\rho)\sharp S(-\sigma),$ hence $\omega_T^\ast$ is isomorphic to $R(\sigma)\sharp S(\rho)$ up to a shift.
\end{Corollary}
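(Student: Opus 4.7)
The plan is to obtain this corollary as essentially a direct specialization of Corollary~\ref{anticanonic}, with only a bookkeeping argument for the second assertion. First I would record the two standard facts that will be used without further comment: (i) a Gorenstein standard graded $K$-algebra $R$ of $a$-invariant $\rho$ has canonical module $\omega_R\cong R(\rho)$, so that $\omega_R^\ast\cong \Hom_R(R(\rho),R)\cong R(-\rho)$, and likewise $\omega_S^\ast\cong S(-\sigma)$; (ii) a Gorenstein ring is Cohen--Macaulay, so $\depth R=\dim R\geq 2$ and $\depth S=\dim S\geq 2$, which is exactly the depth hypothesis needed to invoke Corollary~\ref{anticanonic}.

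With the hypotheses of Corollary~\ref{anticanonic} in place (friendliness of $(R,S)$ is part of the data, and the depth conditions are checked above), I would apply it to conclude
\[
\omega_T^\ast \;\cong\; \omega_R^\ast \sharp\, \omega_S^\ast \;\cong\; R(-\rho)\sharp S(-\sigma),
\]
which is the first stated isomorphism.

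For the second assertion, I would simply compare graded components. The $i$-th component of $R(-\rho)\sharp S(-\sigma)$ is $R_{i-\rho}\otimes_K S_{i-\sigma}$, while the $i$-th component of $R(\sigma)\sharp S(\rho)$ is $R_{i+\sigma}\otimes_K S_{i+\rho}$. Setting $k=\rho+\sigma$ and substituting $i\mapsto i+k$ in the first gives exactly $R_{i+\sigma}\otimes_K S_{i+\rho}$, so
\[
\bigl(R(-\rho)\sharp S(-\sigma)\bigr)(\rho+\sigma) \;=\; R(\sigma)\sharp S(\rho),
\]
which is the claimed identification up to a shift.

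There is no real obstacle here; the content is entirely contained in Corollary~\ref{anticanonic} and in the identification of the canonical module of a Gorenstein ring as a twist of the ring itself. The only point one needs to be slightly careful about is that the isomorphism in the second assertion really is only up to an overall degree shift by $\rho+\sigma$, not an equality of graded modules, which is already reflected in the statement.
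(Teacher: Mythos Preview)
Your proof is correct and follows essentially the same route as the paper's own argument: identify $\omega_R\cong R(\rho)$ and $\omega_S\cong S(\sigma)$, invoke Corollary~\ref{anticanonic} (which the paper uses implicitly via the friendliness hypothesis) to obtain $\omega_T^\ast\cong R(-\rho)\sharp S(-\sigma)$, and then observe the shift identity $R(-\rho)\sharp S(-\sigma)=(R(\sigma)\sharp S(\rho))(-\rho-\sigma)$. The only cosmetic difference is that the paper first writes down $\omega_T\cong R(\rho)\sharp S(\sigma)$ and then dualizes, whereas you dualize each factor first and then take the Segre product; these are the same computation.
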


\begin{proof}
 Under our hypothesis, we have $\omega_R=R(\rho)$ and $\omega_S=S(\sigma).$ It follows that $\omega_T\cong R(\rho)\sharp S(\sigma)$, thus $\omega_T^\ast \cong R(-\rho)\sharp S(-\sigma)=  (R(\sigma)\sharp S(\rho))(-\rho-\sigma)$.
\end{proof}

\begin{Proposition}\label{reflexiv}
Let $(R,S)$ be a pair of friendly standard graded algebras and $T=R\sharp S.$ Assume that $R,S$ are Gorenstein of $a$-invariants $\rho<0,$ respectively, $\sigma<0$, and $\dim R,$ $ \dim S\geq 2.$
Then
\begin{itemize}
	\item [(i)] the canonical module $\omega_T$ is reflexive;
	\item [(ii)] the localization $T_P$  is Gorenstein for any height $1$ prime ideal $P$ of $T.$
\end{itemize}
\end{Proposition}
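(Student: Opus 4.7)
The plan is to compute $\omega_T^{\ast\ast}$ directly by applying the friendly-pair hypothesis twice, deduce (i) from this computation, and then obtain (ii) as a formal consequence of (i) via \cite[Theorem 7.31]{HK}. The key observation is that since $R$ and $S$ are Gorenstein, their canonical modules are free and hence trivially reflexive; the friendly condition propagates this reflexivity across the Segre product.

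For (i), I would first use $\dim R, \dim S \geq 2$ to identify $\omega_T \cong \omega_R \sharp \omega_S$ via \cite[Theorem 4.3.1]{GW}. Because $R$ and $S$ are Gorenstein with $a$-invariants $\rho$ and $\sigma$, we have $\omega_R \cong R(\rho)$ and $\omega_S \cong S(\sigma)$, both free, so the canonical biduality maps $\omega_R \to \omega_R^{\ast\ast}$ and $\omega_S \to \omega_S^{\ast\ast}$ are isomorphisms. Invoking Theorem \ref{commute} on the friendly pair $(R,S)$ first to the modules $(\omega_R, \omega_S)$ and then to $(\omega_R^\ast, \omega_S^\ast)$, I obtain the chain of natural isomorphisms
\[
\omega_T^{\ast\ast} \cong (\omega_R \sharp \omega_S)^{\ast\ast} \cong (\omega_R^\ast \sharp \omega_S^\ast)^\ast \cong \omega_R^{\ast\ast} \sharp \omega_S^{\ast\ast} \cong \omega_R \sharp \omega_S \cong \omega_T,
\]
which gives the reflexivity of $\omega_T$.

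For (ii), the hypotheses $\rho, \sigma < 0$ and $\dim R, \dim S \geq 2$ imply, via the Goto--Watanabe vanishing criterion, that the top local cohomologies $H^{\dim R}_{\mm}(R) \cong \omega_R^\vee$ and $H^{\dim S}_{\nn}(S) \cong \omega_S^\vee$ are concentrated in strictly negative degrees; consequently their Segre product vanishes in non-negative degrees and $T$ is Cohen--Macaulay with canonical module $\omega_T$. Combining this with the reflexivity established in (i), \cite[Theorem 7.31]{HK} then yields that $T_P$ is Gorenstein for every height one prime $P$ of $T$.

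The main point requiring care is verifying that the chain of isomorphisms displayed above truly agrees with the canonical biduality map $\omega_T \to \omega_T^{\ast\ast}$, rather than merely exhibiting an abstract isomorphism of the two modules. This should reduce to the functoriality of the natural transformation $\alpha$ used to define a friendly pair, together with the fact that biduality on a free module is canonically the identity; the ensuing diagram chase is the one technical step that I would have to spell out carefully in order to legitimize the application of \cite[Theorem 7.31]{HK}.
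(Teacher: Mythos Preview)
Your proposal is correct and follows essentially the same route as the paper: compute $\omega_T^{\ast\ast}$ by applying the friendly-pair isomorphism twice to $\omega_R\sharp\omega_S=R(\rho)\sharp S(\sigma)$, and then invoke \cite[Theorem~7.31]{HK} for~(ii). You are in fact more explicit than the paper on two points---you justify that $T$ is Cohen--Macaulay (needed for \cite[Theorem~7.31]{HK}, and the only place where $\rho,\sigma<0$ enters), and you flag the naturality issue for the biduality map, which the paper passes over in silence.
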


\begin{proof}
(i). We have $(R(\rho)\sharp S(\sigma))^{\ast\ast}\cong (R(-\rho)\sharp S(-\sigma))^\ast\cong R(\rho)\sharp S(\sigma),$ thus the canonical module of $T$ is reflexive.

Statement (ii) follows by \cite[Theorem 7.31]{HK}.
\end{proof}

\begin{Remark} {\em Note that, in the setting of  Corollary~\ref{Gor}, $\omega_T$  is generated by $1\otimes S_{\sigma-\rho}$ if $\rho<\sigma$ and $\omega_T^\ast$  is generated by $R_{\sigma-\rho}\otimes 1.$ This shows that one can easily find examples of rings $R,S$ such that $\mu(\omega_T)>\mu(\omega_T^\ast)$ or $\mu(\omega_T^\ast)>\mu(\omega_T).$ By $\mu(M)$ we denote the minimal number of homogeneous generators of a finitely generated graded module $M.$
}
\end{Remark}

It would be interesting to see under which conditions the anticanonical module of Corollary~\ref{Gor} is Cohen-Macaulay. To this aim, we
first prove  a slightly more general result, inspired by \cite[Proposition 4.2.2]{GW}.

\begin{Proposition}\label{Depth}
Let $R,S$ be Gorenstein standard graded algebras with $\dim R=r\geq 1$ and $\dim S=s\geq 1.$ Let $\rho$ be the $a$--invariant of $R$ and
$\sigma$ the $a$--invariant of $S.$ Let $M=R(a)\sharp S(b)$ for some integers $a,b.$ The following statements hold:
\begin{itemize}
	\item [(i)] If $r=s>1,$ then $\depth M=r$ if $a-b\geq-\sigma$ or $a-b\leq \rho,$ and $M$ is Cohen-Macaulay if $-\sigma > a-b> \rho.$
	\item [(ii)] If $r>s>1,$ then
	\[
	\depth M=\left\{
	\begin{array}{ll}
		s, & \text{ if } b-a\leq \sigma,\\
		r, & \text{ if } a-b\leq \rho \text{ and } a-b<-\sigma,
	\end{array}
	 \right.
	\] and $M$ is Cohen-Macaulay in all the other cases, that is, if  $-\sigma > a-b> \rho.$
	\item [(iii)] If $r>s=1,$ then $\depth M=1$ if $b-a\leq \sigma$ and $M$ is Cohen-Macaulay otherwise.
	\item [(iv)] If $r=s=1,$ then $M$ is Cohen-Macaulay for any $a,b.$
\end{itemize}
\end{Proposition}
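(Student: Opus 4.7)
The plan is to compute the graded local cohomology modules $H^i_{\mm_T}(M)$ and identify the smallest $i$ for which this module is nonzero; by Grothendieck's theorem, this value equals $\depth_T M$, and $M$ is Cohen-Macaulay precisely when it equals $\dim M = r + s - 1$.

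First, I would use that $R,S$ are Gorenstein of dimensions $r,s$ with $\omega_R \cong R(\rho)$ and $\omega_S \cong S(\sigma)$, so that local duality yields $H^p_{\mm_R}(R(a)) = 0$ for $p \neq r$ and $H^r_{\mm_R}(R(a)) \cong R^\vee(a - \rho)$, whose $k$-th graded component is nonzero iff $k \leq \rho - a$. The analogous statement holds for $S(b)$, with $\sigma$ and $b$ in place of $\rho$ and $a$.

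The key ingredient is the K\"unneth-type formula \cite[Theorem 4.1.5]{GW} for local cohomology of a Segre product. Combined with the Gorenstein vanishing above, it shows that $H^i_{\mm_T}(M)$ can be nonzero only for $i \in \{r,\, s,\, r + s - 1\}$, with contributions coming from the three summands
\[
H^r_{\mm_R}(R(a)) \sharp S(b), \qquad R(a) \sharp H^s_{\mm_S}(S(b)), \qquad H^r_{\mm_R}(R(a)) \sharp H^s_{\mm_S}(S(b)),
\]
sitting in cohomological degrees $r$, $s$, and $r + s - 1$ respectively. Comparing the graded supports $k \leq \rho - a$ and $k \geq -b$ of the two factors of the first summand, I would then conclude that it is nonzero iff $a - b \leq \rho$; similarly, the second is nonzero iff $b - a \leq \sigma$; and the third summand is always nonzero, since both of its factors have unbounded support in sufficiently negative degrees.

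To finish, $\depth M$ is the minimum of those elements of $\{r,\, s,\, r + s - 1\}$ for which the corresponding summand survives, and the four cases of the proposition follow by routine enumeration: in case (i) only $r$ and $2r - 1$ are distinct candidates; in case (ii) all three are distinct and ordered $s < r < r + s - 1$; in case (iii) we have $r = r + s - 1$, so only $s = 1$ and $r$ appear; and in case (iv) everything collapses to $1$. The main subtlety, and the only genuine obstacle, is the careful bookkeeping of the graded degree ranges of the Matlis duals $R^\vee(a - \rho)$ and $S^\vee(b - \sigma)$ when intersected with the supports of the free modules $S(b)$ and $R(a)$, so that the precise non-strict inequalities in the statement emerge with the correct direction.
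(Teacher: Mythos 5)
Your proposal is correct and follows essentially the same route as the paper's proof: the Goto--Watanabe K\"unneth decomposition of $H^q_{\pp}(M)$ for the Segre product, local duality to reduce to $H^r_{\mm}(R(a))\cong R^\vee(a-\rho)$ and $H^s_{\nn}(S(b))\cong S^\vee(b-\sigma)$, and a comparison of the graded supports $k\le \rho-a$, $k\ge -b$, etc., leading to the same enumeration of cases. The only point to watch is case (iv): the K\"unneth formula as invoked needs $\dim R>1$ or $\dim S>1$, so for $r=s=1$ the paper quotes \cite[Theorem 4.2.3]{GW} directly instead of running the same computation.
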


\begin{proof} We first note that (iv) follows by \cite[Theorem 4.2.3]{GW}. Let us now prove statements (i)--(iii).

By \cite[Theorem 4.2.3]{GW}, $\dim R\sharp S=r+s-1.$
Let $\mm, \nn,$ and $\pp$ be the maximal graded ideals of $R,S,$ and, respectively, $R\sharp S.$
Since $\dim R>1$ or $\dim S>1, $ by \cite[Theorem 4.1.5]{GW}, we have
\begin{equation}\label{eq5}
H_\pp^q(M)\cong (R(a)\sharp H_\nn^q(S(b)))\dirsum (H_\mm^q(R(a))\sharp S(b))\dirsum (\dirsum_{i+j=q+1}H_\mm^i(R(a))\sharp H_\nn^j (S(b))),
\end{equation}
for every $q\geq 0.$ By local duality \cite[Theorem 3.6.19]{BHbook}, we have $H_\mm^q(R(\rho))\cong R^\vee$ if $q=r$ and
$H_\mm^q(R(\rho))=0$ if $q\neq r.$ Hence,
$
H_\mm^q(R(a))=H_\mm^q(R(\rho))(a-\rho)=R^\vee(a-\rho)$ if $q=r$ and $H_\mm^q(R(a))=0,$ otherwise. Similarly, we get
$H_\nn^q(S(b))=S^\vee (b-\sigma)$ if $q=s$ and $H_\nn^q(S(b))=0,$ otherwise.

(i). Let $r=s>1. $ By using (\ref{eq5}), we get $H_\pp^q(M)=0$ if $q\neq r, q<2r+1,$ and
\[
H_\pp^r(M)\cong (R(a)\sharp H_\nn^r(S(b)))\dirsum (H_\mm^r(R(a))\sharp S(b))\cong (R(a)\sharp S^\vee(b-\sigma))\dirsum(R^\vee(a-\rho)\sharp S(b)).
\]
Thus $H_\pp^r(M)\neq 0$ if and only if there exists $i$ such that $(R(a)\sharp S^\vee(b-\sigma))_i\neq 0$ or
$(R^\vee(a-\rho)\sharp S(b))_i\neq 0.$ This is equivalent to
$(a+i\geq 0$ and $b-\sigma +i\leq 0)$ or $(a-\rho+i\leq 0$ and $b+i\geq 0)$ which means $-a\leq i\leq \sigma-b$ or $-b\leq i\leq \rho-a.$
Therefore, $H_\pp^r(M)\neq 0$ if and only if $b-a\leq \sigma$ or $a-b\leq \rho.$ This implies (i).

(ii). Let $r>s>1.$ By using (\ref{eq5}), we get $H_\pp^s(M)\cong (R(a)\sharp H_\nn^s(S(b))).$  Thus, $H_\pp^s(M)\neq 0$ if and only if
$R(a)\sharp S^\vee(b-\sigma)_i\neq 0.$ With similar calculations as in case (i), we get $H_\pp^s(M)\neq 0$ if and only if $b-a\leq \sigma.$
Next, $H_\pp^r(M)\cong H_\mm^r(R(a))\sharp S(b)=R^\vee(a-\rho)\sharp S(b).$ Thus $H_\pp^r(M)\neq 0$ if and only if there exists $i$ with
$a-\rho+i\leq \sigma$ and $b+i\geq 0.$ Hence, $H_\pp^r(M)\neq 0$ if and only if $a-b\leq \rho $ and the conclusion of (ii) follows.

(iii). Let $r>s=1.$ By (\ref{eq5}), we get $H_\pp^1(M)\cong R(a)\sharp H_\nn^1 (S(b))\cong R(a)\sharp S^\vee(b-a).$ Thus,
$H_\pp^1(M)\neq 0$ if and only if there exists $i$ such that $i+a\geq 0$ and $b-\sigma+i\leq 0.$ This implies that $H_\pp^1(M)\neq 0$ if and only if $b-a \leq \sigma. $ In this case, $\depth M=1. $
\end{proof}

\begin{Corollary}\label{CM}
Let $(R,S)$ be a pair of friendly standard graded algebras and $T=R\sharp S.$ Assume that $R,S$ are Gorenstein of $a$-invariants $\rho,$ respectively, $\sigma$, and $\dim R,$ $ \dim S\geq 2.$ Then, the anticanonical module $\omega_T^\ast$ is Cohen-Macaulay if and only if $\sigma> 2\rho$ and $\rho> 2 \sigma$.
\end{Corollary}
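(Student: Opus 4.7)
The plan is to combine Corollary \ref{Gor} (which identifies the anticanonical module of $T$) with Proposition \ref{Depth} (which determines when a twist of a Segre product is Cohen--Macaulay), and then to translate the resulting inequalities into the claimed form.

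First, by Corollary \ref{Gor} we have $\omega_T^\ast \cong R(-\rho)\sharp S(-\sigma)$ up to a shift. Since Cohen--Macaulayness is invariant under graded shifts, I can study the module $M = R(a)\sharp S(b)$ with $a=-\rho$ and $b=-\sigma$, and apply Proposition \ref{Depth} directly. The hypothesis $\dim R, \dim S \geq 2$ rules out the cases (iii) and (iv) of the proposition, leaving the three possibilities $r=s\geq 2$, $r>s\geq 2$, and (by symmetry of the Segre product) $s>r\geq 2$.

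Next I would observe that in all three remaining cases Proposition \ref{Depth} yields the \emph{same} Cohen--Macaulayness criterion, namely
\[
-\sigma \;>\; a-b \;>\; \rho.
\]
For case (i) this is explicit. For case (ii), $M$ is Cohen--Macaulay precisely when we are neither in the regime $b-a\leq \sigma$ nor in the regime $a-b\leq \rho$, giving $b-a>\sigma$ and $a-b>\rho$, equivalently $-\sigma>a-b>\rho$. The case $s>r\geq 2$ is obtained by exchanging the roles of $R$ and $S$, which replaces $(a,b,\rho,\sigma)$ by $(b,a,\sigma,\rho)$; the resulting inequality $-\rho>b-a>\sigma$ is again equivalent to $-\sigma>a-b>\rho$.

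Finally, the routine step: substituting $a=-\rho$ and $b=-\sigma$ gives $a-b = \sigma-\rho$, so the Cohen--Macaulayness criterion becomes
\[
-\sigma \;>\; \sigma-\rho \;>\; \rho,
\]
which is equivalent to $\rho > 2\sigma$ and $\sigma > 2\rho$ simultaneously, completing the proof. No real obstacle is expected; the only point that merits a careful sentence is the verification that the three dimension subcases of Proposition \ref{Depth} collapse to a single symmetric condition, so that the stated criterion is correctly symmetric in $(R,\rho)$ and $(S,\sigma)$.
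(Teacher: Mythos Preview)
Your proposal is correct and follows essentially the same approach as the paper: invoke Corollary~\ref{Gor} to identify $\omega_T^\ast \cong R(-\rho)\sharp S(-\sigma)$, then apply Proposition~\ref{Depth} with $a=-\rho$, $b=-\sigma$. The paper's proof is two lines and leaves the case analysis and the substitution implicit; you have simply unpacked these details, including the useful observation that cases (i), (ii), and the symmetric version of (ii) all yield the same criterion $-\sigma>a-b>\rho$.
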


\begin{proof}
By Corollary~\ref{Gor}, we have $\omega_T^\ast \cong R(-\rho)\sharp S(-\sigma).$ The conclusion follows by applying Proposition~\ref{Depth} for $a=-\rho$ and $b=-\sigma.$
\end{proof}

We can generalize the above corollary to any friendly family $R_1,\ldots,R_m$ of Gorenstein standard graded algebras of dimension at least two.

\begin{Theorem}
\label{mfactors-2}
Let $R_1,\ldots,R_m$ be a friendly family of standard graded algebras with $\dim R_i=d_i\geq 2$ for $1\leq i\leq m$.
Assume that $R_i$ is Gorenstein of $a$--invariant $-\rho_i$ for $1\leq i\leq m,$ with $\rho_1 \geq \rho_2 \geq \cdots \geq\rho_m$.
Set $T=R_1\sharp \cdots \sharp R_m$ and let $a$ be any integer.

Then the  $T$-module $M =\sharp_{i=1}^m R_i(-a\rho_i)$ is Cohen-Macaulay if and only if
\begin{eqnarray*}
(1-a) \rho_{\ell+1} &>& -a \rho_\ell ,\qquad \text{ for } \ell=1, \dots, m-1, \text{ if } a \leq 0, \text{ or} \\
a\rho_{\ell+1} &>& (a-1) \rho_\ell, \ \text{ for } \ell=1, \dots, m-1,  \text{ if } a>0.
\end{eqnarray*}
In particular, if $M$ is a Cohen-Macaulay module then $T$ is a Cohen-Macaulay ring.
\end{Theorem}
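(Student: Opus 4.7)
The plan is to generalize to $m$ factors the local cohomology computation from the proof of Proposition~\ref{Depth}. Let $\pp$ denote the maximal graded ideal of $T$; then $\dim T = \sum_{i=1}^m(d_i - 1) + 1$, and $M$ is Cohen-Macaulay if and only if $H_\pp^q(M) = 0$ for every $q < \dim T$. The key technical ingredient is an iteration of the two-factor Goto--Watanabe formula~\eqref{eq5}: applied inductively to the factorization $T = T_{m-1} \sharp R_m$ (with $T_{m-1} = R_1 \sharp \cdots \sharp R_{m-1}$), it yields the decomposition
\[
H_\pp^q(M) \cong \bigoplus_{\substack{\emptyset \neq S \subseteq \{1,\ldots,m\} \\ q_i \geq 1,\, i \in S \\ \sum_{i \in S} q_i = q + |S| - 1}} \left(\sharp_{i \in S} H_{\mm_i}^{q_i}(R_i(-a\rho_i))\right) \sharp \left(\sharp_{j \notin S} R_j(-a\rho_j)\right),
\]
in which each summand is indexed by the subset $S$ of factors contributing via local cohomology.

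Since each $R_i$ is Gorenstein with $a$-invariant $-\rho_i$, local duality yields $H_{\mm_i}^{q_i}(R_i(-a\rho_i)) \cong R_i^\vee((1-a)\rho_i)$ for $q_i = d_i$ and $0$ otherwise. Therefore, for each nonempty $S$, the single nonzero contribution lives in cohomological degree $q_S := \sum_{i \in S}(d_i - 1) + 1$ and equals
\[
N_S := \left(\sharp_{i \in S} R_i^\vee((1-a)\rho_i)\right) \sharp \left(\sharp_{j \notin S} R_j(-a\rho_j)\right).
\]
The case $S = \{1,\ldots,m\}$ gives $q_S = \dim T$ (the top cohomology), while each proper nonempty $S$ gives $q_S < \dim T$, and these non-top summands must all vanish for $M$ to be Cohen-Macaulay. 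An inspection of graded components shows $N_S \neq 0$ iff there exists $n \in \ZZ$ with $a\rho_j \leq n$ for all $j \notin S$ and $n \leq (a-1)\rho_i$ for all $i \in S$, i.e., iff $\max_{j \notin S} a\rho_j \leq \min_{i \in S}(a-1)\rho_i$. Hence $M$ is Cohen-Macaulay precisely when
\[
\max_{j \notin S} a\rho_j \;>\; \min_{i \in S}(a-1)\rho_i \qquad \text{for every nonempty proper } S \subsetneq \{1,\ldots,m\}.
\]

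The next step is to use the ordering $\rho_1 \geq \cdots \geq \rho_m$ to reduce this family of conditions to the $m-1$ stated in the theorem. For $a \leq 0$, both $a\rho$ and $(a-1)\rho$ are weakly increasing as functions of the index, so the hardest inequality arises at the upper intervals $S = \{\ell+1, \ldots, m\}$, giving $a\rho_\ell > (a-1)\rho_{\ell+1}$, equivalent to $(1-a)\rho_{\ell+1} > -a\rho_\ell$; any other $S$'s condition then follows by comparing with the upper interval sharing the same $\min S$ and exploiting the monotonicity. For $a > 0$ the situation is dual: both $a\rho$ and $(a-1)\rho$ are weakly decreasing in the index, so the tight conditions come from the lower intervals $S = \{1,\ldots,\ell\}$, yielding $a\rho_{\ell+1} > (a-1)\rho_\ell$. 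The ``In particular'' claim is then a short corollary: the condition at $\ell = m-1$ forces $\rho_m > 0$ in every regime of $a$ (otherwise all $\rho_\ell$ would be nonpositive by the ordering and the inequality would fail), hence all $\rho_i > 0$ by the ordering, which is exactly the Cohen-Macaulayness of $T$ (the case $a = 0$ of the theorem).

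The principal obstacle is the iterated local cohomology formula above: although it follows formally by induction on $m$ from \eqref{eq5}, carefully tracking which factors appear as local cohomology versus as the module itself, and maintaining the constraint $\sum_{i \in S}q_i = q + |S| - 1$ through the induction, requires attention. Once the formula is in place, the rest of the proof is a case analysis essentially extending the arguments of Proposition~\ref{Depth} and Corollary~\ref{CM}.
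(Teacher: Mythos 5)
Your proposal is correct and follows essentially the same route as the paper: the iterated Goto--Watanabe decomposition of $H_\pp^q(M)$ indexed by nonempty subsets of factors, local duality to reduce each summand to $R_i^\vee((1-a)\rho_i)$, the degree-component analysis giving the vanishing criterion $\max_{j\notin S}a\rho_j>\min_{i\in S}(a-1)\rho_i$, and the reduction to the $m-1$ interval conditions via the monotonicity of $a\rho_i$ and $(a-1)\rho_i$ in the index. The only blemish is the parenthetical justification that the $\ell=m-1$ condition forces $\rho_m>0$ (``otherwise all $\rho_\ell$ would be nonpositive'' is not the right reason); the correct one-line argument, as in the paper, is that e.g.\ for $a>1$ the inequality $a\rho_m>(a-1)\rho_{m-1}\geq(a-1)\rho_m$ gives $\rho_m>0$, and similarly for $a<0$.
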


\begin{proof}
Let $M_i=R_i(-a\rho_i)$ and $\mm_i$ the graded maximal ideal of $R_i$ for $1\leq i\leq m.$  Let $\mm$ be the graded maximal ideal of $\sharp_{i=1}^m R_i.$
By applying induction on $m$ and \cite[Proposition~4.4.3, Theorem~4.1.5]{GW}, we obtain:
\begin{equation}\label{eq6bis}
 \dim \sharp_{i=1}^m M_i=\sum_{i=1}^m d_i-(m-1)
\end{equation}
	and
\begin{equation}\label{eq7bis}
H_{\mm}^q(M)\cong \Dirsum_{\ell =1}^m \Dirsum_{\stackrel{1\leq i_1<\cdots <i_{\ell}\leq m}{j_1+\cdots + j_\ell=q+(\ell-1)}}
M_1\sharp\cdots\sharp H_{\mm_{i_1}}^{j_1}(M_{i_1})\sharp \cdots \sharp H_{\mm_{i_\ell}}^{j_\ell}(M_{i_\ell})\sharp \cdots \sharp M_m.
\end{equation}

Using \cite[Theorems 3.5.7 and 3.6.18]{BHbook} we get
\begin{eqnarray}
\nonumber 	 H_{\mm_{i}}^{j}(M_i)&=&H_{\mm_{i}}^{j}(R_i(-a\rho_i))=0 \text{ if }j\neq d_i \text{ and } \\
\label{eq8bis}  H_{\mm_{i}}^{d_i}(M_i) &=& H_{\mm_{i}}^{d_i}(R_i(-a\rho_i)) \cong H_{\mm_i}^{d_i}(R_i)(-a\rho_i) \cong (\omega_{R_i}^\vee)(-a\rho_i) \\
\nonumber 	&\cong& (R_i(-\rho_i))^\vee(-a\rho_i) \cong R_i^\vee((1-a)\rho_i).
\end{eqnarray}

Set $d=\sum_{i=1}^m d_i-(m-1)$. We know that $M$ is Cohen-Macaulay if and only if $H_{\mm}^{q}(M)=0$ for all $q<d$.
When $q=d$, each summand in the RHS of  \eqref{eq7bis} corresponds to a choice
of $\ell$ with $1\leq \ell \leq m$, of $1 \leq i_1 < \dots < i_\ell \leq m$ and
$j_1,\dots, j_\ell$ such that $j_1+\dots+j_\ell = \sum_{i=1}^m (d_i-1) +\ell$.
The latter equation implies $\sum_{i=1}^\ell (j_i-1)=\sum_{i=1}^m(d_i-1)$.
Since $j_i \leq d_i$ we must have $\ell=m$, $j_i=d_i$ for $i=1,\dots, m$ and $i_1=1, \dots, i_m=m$, hence
$$
H_{\mm}^{d}(M) \cong \sharp_{i=1}^m H_{\mm_i}^{d_i}(M_i).
$$
On the other hand, by \eqref{eq8bis},
for any nonzero summand in the RHS of \eqref{eq7bis} with $\ell=m$ we must have $i_1=1, \dots, i_m=m$ and $j_1=d_1, \dots, j_m=d_m$,
therefore $q=\sum_{i=1}^m d_i- (m-1)=d$.

Therefore, $M$ is Cohen-Macaulay if and only if, for $1\leq \ell\leq m-1,$ and for all $1\leq i_1<\cdots< i_\ell \leq m, $ we have
\[
M_1\sharp\cdots\sharp H_{\mm_{i_1}}^{d_{i_1}}(M_{i_1})\sharp \cdots \sharp H_{\mm_{i_\ell}}^{d_{i_\ell}}(M_{i_\ell})\sharp \cdots \sharp M_m=0.
\]
Note that, by (\ref{eq8bis}),
\[
M_1\sharp\cdots\sharp H_{\mm_{i_1}}^{d_{i_1}}(M_{i_1})\sharp \cdots \sharp H_{\mm_{i_\ell}}^{d_{i_\ell}}(M_{i_\ell})\sharp \cdots \sharp M_m
\neq 0
\]
if and only if there exists $k$ such that $k-a\rho_i\geq 0$ for $i\not \in \{i_1,\ldots,i_\ell\}$ and $(1-a)\rho_i+k \leq 0$ for
$i\in \{i_1,\ldots,i_\ell\}.$
This is equivalent to the inequalities
\[
\min\{(a-1)\rho_i: i\in \{i_1,\ldots,i_\ell\}\}\geq k\geq \max\{a \rho_i:i\not \in \{i_1,\ldots,i_\ell\} \}.
\]

Thus, we derive that
\[
M_1\sharp\cdots\sharp H_{\mm_{i_1}}^{d_{i_1}}(M_{i_1})\sharp \cdots \sharp H_{\mm_{i_\ell}}^{d_{i_\ell}}(M_{i_\ell})\sharp \cdots \sharp M_m
= 0
\] if and only if
\begin{equation}\label{eq9bis}
\max\{a\rho_i: i\notin \{i_1,\ldots,i_\ell\}\} > \min\{(a-1)\rho_i:i \in \{i_1,\ldots,i_\ell\} \}.
\end{equation}

Consequently, $M$ is Cohen-Macaulay if and only if, for $1\leq \ell\leq m-1,$ and for all $1\leq i_1<\cdots< i_\ell \leq m, $ inequality
(\ref{eq9bis}) holds.

{\em Case $a<0$}. The inequalities \eqref{eq9bis} are equivalent to
\begin{equation}\label{eq9bis-a-negative}
(1-a) \max \{ \rho_i: i\in \{ i_1, \dots, i_\ell \}\} > -a \min \{ \rho_i: i\not\in\{i_1, \dots, i_\ell \}\}
\end{equation}
for all $\ell$ and $i_1, \dots, i_\ell$ as above.

Let $1\leq \ell\leq m-1. $ By (\ref{eq9bis-a-negative}), we have
\[
(1-a) \max\{\rho_{m-\ell+1},\ldots,\rho_m\} > -a \min\{\rho_1,\ldots,\rho_{m-\ell}\}
\]  which implies $(1-a)\rho_{m-\ell+1}> -a \rho_{m-\ell}$. Equivalently, $(1-a) \rho_{\ell+1} > -a \rho_\ell$  for $\ell=1, \dots, m-1$.

 Conversely, if $(1-a) \rho_{m-\ell+1}> -a\rho_{m-\ell}$ then, for any
$1\leq i_1<\cdots < i_\ell\leq m,$ we have
\begin{eqnarray*}
(1-a)\max\{\rho_i: i\in \{i_1,\ldots,i_\ell\}\} &\geq& (1-a) \rho_{m-\ell+1} \\
&>& -a \rho_{m-\ell} \geq -a\min\{\rho_i:i\not \in \{i_1,\ldots,i_\ell\} \},
\end{eqnarray*}
and \eqref{eq9bis-a-negative} holds.

{\em Case $a=0$}. According to \eqref{eq9bis}, $M$ is Cohen-Macaulay if and only if
$$
0>-\max\{ \rho_i: i\in \{i_1, \dots, i_\ell\}\}= -\rho_{i_1},
$$
for  all $\ell<m$ and $i_1<\dots<i_\ell$, which is equivalent to requiring $\rho_i>0$ for $i=1, \ldots, m$.

In {\em case $a=1$},  the inequalities \eqref{eq9bis} imply that $M$ is Cohen-Macaulay if and only if
   $\rho_i>0$ for $i=1, \dots, m$.

{\em Case $a>1$}. Now \eqref{eq9bis} implies that $M$ is Cohen-Macaulay if and only if
\begin{equation}
\label{eq9bis-a-positive}
a \max\{ \rho_i: i \not\in \{i_1, \dots, i_\ell\} \} > (a-1) \min\{\rho_i: i\in \{i_1, \dots, i_\ell \}\}
\end{equation}
for all $1\leq \ell < m$ and $1\leq i_1<\dots < i_\ell \leq m$. In particular, for any $1\leq \ell<m$ one has
$$
a \max\{\rho_{\ell+1}, \dots, \rho_m\} >(a-1) \min\{ \rho_1, \dots, \rho_\ell\},
$$
i.e. $a \rho_{\ell+1} > (a-1)\rho_\ell$ for $1\leq \ell <m$. We check that the latter  inequalities imply \eqref{eq9bis-a-positive}.
Indeed, for $\ell<m$ and $i_1<\dots<i_\ell$ we have
\begin{eqnarray*}
a \max\{\rho_i: i\not\in\{i_1, \dots, i_\ell\} \} &\geq& a \{ \rho_{\ell+1}, \dots, \rho_m\} =a \rho_{\ell+1} \\
&>& (a-1)\rho_\ell= (a-1)\min\{\rho_1, \dots, \rho_\ell\} \\
&\geq& (a-1)\min\{ \rho_i: i\in \{i_1, \dots, i_\ell\}\}.
\end{eqnarray*}

Let us assume that $M$ is a Cohen-Macaulay $T$-module. To prove that $T$ is a Cohen-Macaulay ring is equivalent
to the a-invariants of $R_i$ be negative for all $i$, see \cite[Theorem 4.4.1(i)]{GW}, or equivalently that $\rho_m>0$.
This was checked above for $0 \leq a\leq 1$.

If $a>1$,   the inequality $a \rho_m> (a-1)\rho_{m-1}$ gives $\rho_m> (a-1)(\rho_{m-1}-\rho_m) \geq 0$.
Similarly, when $a<0,$  the inequality $(1-a)\rho_m >a\rho_{m-1}$ implies that $\rho_m > a(\rho_m-\rho_{m-1})\geq 0$, as desired.
\end{proof}

\begin{Remark}
{\em
 In the setting of Theorem \ref{mfactors-2}, for $a=0$ we get  $M=\sharp_{i=1}^m R_i=T$,
and if $a=1$ we get  $M=\sharp_{i=1}^m R_i(-\rho_i)\cong \sharp_{i=1}^m\omega_{R_i}\cong \omega_T$, see \cite[Theorem 4.3.1]{GW}.
}
\end{Remark}

\begin{Corollary}
\label{cor:cm-chain}
In the setting of Theorem \ref{mfactors-2}, assume $a\neq 0,1$. Denote $C=(a/(a-1))^{sgn(a)}$. Then $M$ is a Cohen-Macaulay $T$-module if and only if
$$
C^{m-1}\rho_m> C^{m-2}\rho_{m-1}>\cdots >C\rho_2 >\rho_1.
$$
In particular, the anticanonical module $\omega_T^\ast$ is Cohen-Macaulay if and only if
\[2^{m-1}\rho_m>2^{m-2}\rho_{m-1}>\cdots> 2\rho_2>\rho_1.\]
\end{Corollary}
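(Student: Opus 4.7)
The plan is to invoke Theorem~\ref{mfactors-2} directly and repackage its two cases (for $a<0$ and for $a>1$) into a single chain of inequalities involving the constant $C$, then specialize to the anticanonical case by choosing $a=-1$. A preliminary observation is that $C>0$ in both regimes: for $a<0$ the sign convention gives $C=(a-1)/a$, a ratio of two negatives, while for $a>1$ one has $C=a/(a-1)$, a ratio of two positives. This positivity is what will allow me to multiply through inequalities without flipping signs.

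Next I would rewrite the characterization from Theorem~\ref{mfactors-2}. In the case $a<0$, dividing the inequality $(1-a)\rho_{\ell+1}>-a\rho_\ell$ by $-a>0$ produces $\frac{a-1}{a}\rho_{\ell+1}>\rho_\ell$, i.e.\ $C\rho_{\ell+1}>\rho_\ell$. In the case $a>1$, dividing $a\rho_{\ell+1}>(a-1)\rho_\ell$ by $a-1>0$ produces $\frac{a}{a-1}\rho_{\ell+1}>\rho_\ell$, again $C\rho_{\ell+1}>\rho_\ell$. Hence in both cases the Cohen--Macaulay criterion for $M$ reads uniformly
\[
C\rho_{\ell+1}>\rho_\ell \qquad \text{for } \ell=1,\dots,m-1.
\]
Multiplying the $\ell$-th inequality by $C^{\ell-1}>0$ yields $C^{\ell-1}\rho_\ell<C^\ell\rho_{\ell+1}$, and chaining these for $\ell=1,\dots,m-1$ gives exactly
\[
\rho_1<C\rho_2<C^2\rho_3<\cdots<C^{m-1}\rho_m.
\]
Conversely, dividing two consecutive terms of this chain by the appropriate positive power of $C$ recovers each individual $C\rho_{\ell+1}>\rho_\ell$, so the equivalence is complete.

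Finally, for the anticanonical statement I would note, by iterating Corollary~\ref{anticanonic} together with the Gorenstein identification $\omega_{R_i}^\ast\cong R_i(\rho_i)$, that $\omega_T^\ast\cong \sharp_{i=1}^m R_i(\rho_i)$; this matches $M$ in Theorem~\ref{mfactors-2} for the value $a=-1$. Then $C=(-1/-2)^{-1}=2$, and the chain specializes to $\rho_1<2\rho_2<\cdots<2^{m-1}\rho_m$, as claimed. The only delicate point, and essentially the main obstacle, is the sign bookkeeping in the unified definition $C=(a/(a-1))^{\operatorname{sgn}(a)}$: one must notice that the exponent $\operatorname{sgn}(a)=-1$ in the $a<0$ case is precisely what is needed so that dividing by $-a$ (rather than by $a-1$) in that case yields the same constant $C$ that appears when $a>1$. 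Once this is recognized, the remainder is a routine chain of equivalences.
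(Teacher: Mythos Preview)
Your proposal is correct and follows essentially the same route as the paper: invoke Theorem~\ref{mfactors-2}, rewrite the two cases as the single condition $C\rho_{\ell+1}>\rho_\ell$, chain them, and then specialize to $a=-1$. The paper's own argument is much terser (it simply says the first part ``follows immediately''), and for the identification $\omega_T^\ast\cong\sharp_{i=1}^m R_i(\rho_i)$ it appeals directly to the friendly-family hypothesis (via Theorem~\ref{commute}) together with \cite[Theorem~4.3.1]{GW}, rather than iterating Corollary~\ref{anticanonic}; this is cleaner since it avoids having to verify the depth and friendliness hypotheses for the intermediate Segre products, but the substance is the same.
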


\begin{proof}
The first part follows immediately from Theorem \ref{mfactors-2}.  For the computation of the anticanonical module $\omega_T^\ast$ we use
that $\omega_{R_i}=R_i(-\rho_i)$, hence $\omega_{R_i}^\ast =R_i(\rho_i)$ for all $i$.  Moreover,  since the algebras $R_1, \dots, R_m$ are friendly, \cite[Theorem 4.3.1]{GW}
yields
\[
\omega_T^\ast\cong (\sharp_{i=1}^m\omega_{R_i})^\ast \cong \sharp_{i=1}^m\omega_{R_i}^\ast  \cong \sharp_{i=1}^m R_i(\rho_i).
\]
\end{proof}

\begin{Proposition}
\label{prop:cm-twists}
Let $R_1,\ldots,R_m$ be a friendly family of standard graded algebras with $\dim R_i\geq 2$ for $1\leq i\leq m$.
Assume that $R_i$ is Gorenstein of $a$--invariant $-\rho_i$ for $1\leq i\leq m$  with $\rho_1 \geq \rho_2 \geq \cdots \geq\rho_m>0$.
Let $\rho=\max \{\rho_i/\rho_{i+1}: i=1, \dots, m-1\}$. For any integer $a$ we set
$T^{\{a\}}= \sharp_{i=1}^m R_i(-a\rho_i)$ and we let $T=T^{\{0\}}$.

Then the  $T$-module $T^{\{a\}}$ is Cohen-Macaulay if and only if either $\rho=1$ or
$$
\frac{1}{1-\rho} <a <\frac{\rho}{\rho-1}.
$$
\end{Proposition}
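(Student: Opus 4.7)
The proof plan is to apply Theorem \ref{mfactors-2} and to reformulate its conditions in the compact form involving the single quantity $\rho$.

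First, I would note that since each $\rho_i > 0$ and $\rho_1 \geq \rho_2 \geq \cdots \geq \rho_m$, every ratio $\rho_\ell/\rho_{\ell+1}$ is at least $1$, so $\rho \geq 1$. The values $a=0$ and $a=1$ are trivial: for $a=0$ we have $T^{\{0\}}=T$, and for $a=1$ we have $T^{\{1\}}\cong \omega_T$ by \cite[Theorem 4.3.1]{GW}, and both are Cohen--Macaulay $T$-modules by \cite[Theorem 4.4.1]{GW} (whose hypotheses are in force because $\rho_m>0$). Moreover, the conditions in Theorem \ref{mfactors-2} reduce at $a=0$ and $a=1$ to the positivity of the $\rho_i$'s, which we assume. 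In the target inequalities, one checks that $a=0$ and $a=1$ always lie in the interval $(1/(1-\rho), \rho/(\rho-1))$ when $\rho>1$, so these two cases are consistent with the claim.

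Next, I would treat the case $a \leq -1$. Dividing $(1-a)\rho_{\ell+1} > -a\rho_\ell$ by the positive number $(-a)\rho_{\ell+1}$, the criterion of Theorem \ref{mfactors-2} becomes
\[
\frac{\rho_\ell}{\rho_{\ell+1}} < \frac{1-a}{-a} = 1 + \frac{1}{-a}, \qquad \ell=1,\dots,m-1.
\]
Taking the maximum over $\ell$ on the left, this is equivalent to $\rho < 1 + 1/(-a)$. If $\rho=1$ this holds for every $a\leq -1$. If $\rho>1$ this rearranges to $-a < 1/(\rho-1)$, i.e.\ $a > -1/(\rho-1) = 1/(1-\rho)$.

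Symmetrically, for $a \geq 2$, dividing $a\rho_{\ell+1} > (a-1)\rho_\ell$ by the positive number $(a-1)\rho_{\ell+1}$, the criterion becomes $\rho_\ell/\rho_{\ell+1} < 1 + 1/(a-1)$ for all $\ell$, equivalently $\rho < 1 + 1/(a-1)$. If $\rho=1$ this holds for every $a\geq 2$; if $\rho>1$ it is equivalent to $a < 1 + 1/(\rho-1) = \rho/(\rho-1)$. Putting the four regimes together (the two trivial values $a=0,1$ and the two parametrized ranges) yields exactly the claim: if $\rho=1$ every integer $a$ works, while if $\rho>1$ the admissible $a$ are precisely the integers strictly between $1/(1-\rho)$ and $\rho/(\rho-1)$. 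No step is an obstacle since the result is essentially a bookkeeping consolidation of Theorem \ref{mfactors-2}; the only mild subtlety is verifying that the cases $a \in \{0,1\}$ fit seamlessly into the uniform interval description when $\rho>1$, which follows from $1/(1-\rho) < 0 < 1 < \rho/(\rho-1)$.
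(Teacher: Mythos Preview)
Your proof is correct and follows essentially the same route as the paper: both apply Theorem~\ref{mfactors-2}, dispose of $a\in\{0,1\}$ and the case $\rho=1$ separately, and then for $a<0$ and $a>1$ rewrite the inequalities $(1-a)\rho_{\ell+1}>-a\rho_\ell$ and $a\rho_{\ell+1}>(a-1)\rho_\ell$ as $\rho<1+\tfrac{1}{-a}$ and $\rho<1+\tfrac{1}{a-1}$, respectively, before solving for $a$. The only cosmetic difference is that the paper treats $\rho=1$ up front (noting $T^{\{a\}}$ is then a shift of the Cohen--Macaulay ring $T$), whereas you absorb it into each case.
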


\begin{proof}
Since $\rho_i >0$ for all $i,$ it follows that $T$ is a Cohen-Macaulay ring,  cf. \cite[Theorem 4.4.4(i)]{GW}.
Our hypothesis on the $\rho_i$'s implies that $\rho \geq 1$. Equality holds  if and only if $\rho_1=\dots=\rho_m$, i.e. $T^{\{a\}}$ is a shifted copy of $T$, hence Cohen-Macaulay.
As a matter of facts, the case $\rho=1$ is equivalent to $T$ being Gorenstein, by \cite[Theorem 4.4.7]{GW}.

Assume $\rho>1$.	Then $1/(1-\rho) <0$ and $1<\rho/(\rho-1)$. Since $T$ is Cohen-Macaulay it follows from Theorem \ref{mfactors-2} that  $T^{\{1\}}$ is Cohen-Macaulay, too.

If $a>1$, by Theorem \ref{mfactors-2} we see that $T^{\{a\}}$ is Cohen-Macaulay if and only if $a/(a-1) >\rho_\ell /\rho_{\ell+1}$
for $\ell=1, \dots, m-1$, i.e. $a/(a-1)>\rho$, which is equivalent to $1+1/(a-1)>\rho$, and to $1/(a-1)>\rho-1$. Thus $a<\rho/(\rho-1)$.

If $a<0$, by Theorem \ref{mfactors-2} it follows that $T^{\{a\}}$ is Cohen-Macaulay if and only if $(a-1)/a > \rho$, which is  equivalent to $a>1/(1-\rho)$.
\end{proof}

\begin{Corollary}
\label{cor:notwist}
In the setting of Proposition \ref{prop:cm-twists}, assume $\rho>1$. Then $T^{\{a\}}$ is Cohen-Macaulay only for $a=0,1$ if and only if $\rho \geq 2$.
In particular, the anticanonical module $\omega_T^\ast$ is not Cohen-Macaulay when $\rho \geq 2$.
\end{Corollary}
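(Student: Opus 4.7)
The plan is to invoke Proposition \ref{prop:cm-twists} directly and analyze the integer points in the Cohen--Macaulay interval. Since $\rho>1$, the characterization says $T^{\{a\}}$ is Cohen--Macaulay if and only if $\tfrac{1}{1-\rho}<a<\tfrac{\rho}{\rho-1}$. I would rewrite these bounds symmetrically as $-\tfrac{1}{\rho-1}<a<1+\tfrac{1}{\rho-1}$, which makes the interval visibly symmetric about $a=\tfrac{1}{2}$. Both $a=0$ and $a=1$ sit strictly inside it for every $\rho>1$, so they always yield Cohen--Macaulay twists (as expected, since $T^{\{0\}}=T$ and $T^{\{1\}}\cong \omega_T$).

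Next, I would test the two neighboring integers $a=-1$ and $a=2$. The strict inequality $-\tfrac{1}{\rho-1}<-1$ fails precisely when $\tfrac{1}{\rho-1}\le 1$, i.e.\ when $\rho\ge 2$; by the symmetry of the interval, the same threshold governs $a=2$. Hence if $\rho\ge 2$, neither $-1$ nor $2$ satisfies the inequalities of Proposition \ref{prop:cm-twists}, and since the interval has length $1+\tfrac{2}{\rho-1}\le 3$, no integer outside $\{-1,0,1,2\}$ could be a solution either. Conversely, if $1<\rho<2$ then $-\tfrac{1}{\rho-1}<-1$ and $1+\tfrac{1}{\rho-1}>2$, so $T^{\{-1\}}$ and $T^{\{2\}}$ are both Cohen--Macaulay, and the set of Cohen--Macaulay twists strictly contains $\{0,1\}$. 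This proves the first assertion.

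For the statement about $\omega_T^\ast$, I would use the identification obtained in the proof of Corollary \ref{cor:cm-chain}, namely
\[
\omega_T^\ast \cong \sharp_{i=1}^m R_i(\rho_i)=T^{\{-1\}}.
\]
By the first part, when $\rho\ge 2$ the value $a=-1$ is not among the Cohen--Macaulay twists, so $\omega_T^\ast$ fails to be Cohen--Macaulay, as claimed.

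No step is really an obstacle here; the whole argument is a careful bookkeeping of integer points in the open interval produced by Proposition \ref{prop:cm-twists}, together with the observation that the anticanonical module is exactly the twist corresponding to $a=-1$.
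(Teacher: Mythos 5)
Your proposal is correct and follows essentially the same route as the paper: both arguments apply the interval characterization from Proposition \ref{prop:cm-twists}, observe that $a=0,1$ always lie inside, check that the integers $-1$ and $2$ lie outside precisely when $\rho\geq 2$, and identify $\omega_T^\ast$ with $T^{\{-1\}}$. Your explicit bounds $-1/(\rho-1)\geq -1$ and $1+1/(\rho-1)\leq 2$ for $\rho\geq 2$ are exactly the conditions the paper verifies, so no further comment is needed.
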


\begin{proof}  It is clear that $1/(1-\rho)<0$ and $\rho/(\rho-1)=1+1/(\rho-1) >1$, hence  $T^{\{a\}}$ is Cohen-Macaulay for $a=0,1$.
That these are the only values, by  Proposition \ref{prop:cm-twists},  it is equivalent to having $-1\leq 1/(1-\rho)$ and $\rho/(\rho-1) \leq 2$.
It is easy to check that both conditions mean $\rho \geq 2$.

As noted in the proof of Corollary \ref{cor:cm-chain}, the anticanonical module $\omega_T^\ast \cong T^{\{-1\}}$ and by the above arguments it is not Cohen-Macaulay when $\rho \geq 2$.
\end{proof}

By \cite[Proposition 3.3.18]{BHbook}, for a standard graded Cohen-Macaulay algebra $R$ which is  generically Gorenstein (for instance, a domain),
its canonical module may be identified with an ideal in $R$, that we call a canonical ideal of $R$. If $I$ and $J$ are two canonical
ideals of $R$, they are isomorphic and there exists an element $x$ invertible  in $Q(R)$ such that $I=xJ$.
Hence, for any integer $a$ the $T$-modules $I^a$ and $J^a$ are isomorphic, and Cohen-Macaulay or not at the same time.

The next result describes which powers of the canonical ideal of a Segre product of some friendly Gorenstein algebras are Cohen-Macaulay.

\begin{Proposition}
\label{prop:powers-cm}
Let $R_1, \dots, R_m$ be a friendly family of standard graded Gorenstein $K$-algebras of dimension at least two
such that $R_1\otimes_K \cdots \otimes_K R_m$ is a domain.
Let $-\rho_i$ be the $a$--invariant of $R_i$ for $i=1, \dots, m$. Assume $\rho_1\geq \rho_2 \geq \cdots \geq \rho_m>0$ and that
$\rho= \max \{   \rho_i/ \rho_{i+1}:i=1, \dots, m -1 \} >1$.

Let $T=\sharp_{i=1}^m R_i$, $\omega_T$   a canonical ideal of $T$, and $a$ any integer. Then $\omega_T^a$ is a Cohen-Macaulay $T$-module
if and only if
$$
\frac{1}{1-\rho} <a <\frac{\rho}{\rho-1}.
$$
\end{Proposition}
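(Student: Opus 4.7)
The plan is to reduce everything to Proposition~\ref{prop:cm-twists} by establishing the isomorphism of graded $T$-modules
\[
\omega_T^a\cong T^{\{a\}}=\sharp_{i=1}^m R_i(-a\rho_i)
\]
for every $a\in\ZZ$. Once this is done, Proposition~\ref{prop:cm-twists} immediately yields the claim.

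To construct this isomorphism, I would work inside $A:=R_1\otimes_K\dots\otimes_K R_m$, which is a domain by hypothesis and carries a $\ZZ^m$-grading with $A_{(n_1,\dots,n_m)}=\otimes_i(R_i)_{n_i}$. Since each $R_i$ is standard graded, $A$ is standard multigraded, and $A_{\vec n}\cdot A_{\vec m}=A_{\vec n+\vec m}$ whenever $\vec n,\vec m\geq 0$. The Segre product $T$ sits inside $A$ as the diagonal piece, and $T^{\{a\}}$ embeds in $A$ as the $\ZZ$-graded $T$-submodule with $T^{\{a\}}_d=A_{(d-a\rho_1,\dots,d-a\rho_m)}$. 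I would pick nonzero elements $\eta_i\in(R_i)_{\rho_i}$ (which exist because each $R_i$ is a standard graded domain of positive dimension, being a subring of the domain $A$) and set $\eta:=\eta_1\otimes\dots\otimes\eta_m\in A_{(\rho_1,\dots,\rho_m)}$. Multiplication by $\eta$ embeds $T^{\{1\}}$ as an ideal $I:=\eta T^{\{1\}}\subseteq T$, which by \cite[Theorem 4.3.1]{GW} is a canonical ideal of $T$. A direct induction using the multigrading of $A$ gives $(T^{\{1\}})^k=T^{\{k\}}$ as $T$-submodules of $A$ for every $k\geq 0$, so that $\omega_T^a\cong I^a=\eta^a T^{\{a\}}\cong T^{\{a\}}$ for all $a\geq 0$.

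For negative $a$, the proof of Corollary~\ref{cor:cm-chain} shows $\omega_T^{-1}\cong\omega_T^\ast\cong T^{\{-1\}}$. The submodule $\eta^{-1}T^{\{-1\}}\subseteq Q(A)$ has diagonal multi-degrees and hence lies in $Q(T)$, giving a realization of $\omega_T^{-1}$ as a fractional $T$-ideal isomorphic to $T^{\{-1\}}$. An entirely parallel multigraded induction yields $(T^{\{-1\}})^k=T^{\{-k\}}$ in $A$. Since two fractional $T$-ideals in $Q(T)$ that are $T$-module isomorphic must differ by a unit in $Q(T)$, the isomorphism class of $\omega_T^{-k}=(\omega_T^{-1})^k$ may be computed from any realization of $\omega_T^{-1}$; using $\eta^{-1}T^{\{-1\}}$ we obtain $\omega_T^{-k}\cong\eta^{-k}T^{\{-k\}}\cong T^{\{-k\}}$, completing the identification $\omega_T^a\cong T^{\{a\}}$ for all $a\in\ZZ$.

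The main obstacle I foresee is verifying the equalities $(T^{\{1\}})^k=T^{\{k\}}$ and $(T^{\{-1\}})^k=T^{\{-k\}}$ inside $A$. The inclusion $\subseteq$ is automatic from the additivity of multi-degrees, but the reverse direction requires exhibiting an explicit factorization in each multigraded piece; this is where the standard multigrading of $A$ together with the ordering $\rho_1\geq\dots\geq\rho_m>0$ enter, guaranteeing that the multi-indices of the factors can be chosen nonnegative so that the rule $A_{\vec n}\cdot A_{\vec m}=A_{\vec n+\vec m}$ applies.
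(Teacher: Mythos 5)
Your proof is correct and follows essentially the same route as the paper's: both realize $\omega_T^a$ as $\sharp_{i=1}^m R_i(-a\rho_i)$ by embedding the canonical (resp.\ anticanonical) module into $T$ (resp.\ $Q(T)$) via multiplication by a suitable element of the tensor product, compute the powers of the resulting (fractional) ideal using standard gradedness, and then invoke Proposition~\ref{prop:cm-twists}. The only cosmetic difference is your multiplier $\eta$ of multidegree $(\rho_1,\dots,\rho_m)$ versus the paper's element of multidegree $(\rho_1-\rho_m,\dots,\rho_{m-1}-\rho_m,0)$.
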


\begin{proof} The Segre product $T$ is a Cohen-Macaulay ring which is not Gorenstein,
since all $\rho_i$'s are positive and $\rho>1.$
By  \cite[Theorem 4.3.1]{GW}, the canonical module of $T$ is isomorphic to $\sharp_{i=1}^m R_i(-\rho_i)$, which
 is generated by $1\otimes R_{2,\rho_1-\rho_2}\otimes\cdots \otimes R_{m, \rho_1-\rho_m}$.
Here $R_{i,j}$ denotes the $j$th homogeneous component of $R_i$ for $i=1, \dots, m$ and $j\geq 0$.
Pick   $f=u_1\otimes \cdots \otimes u_{m-1} \otimes 1$ with $0\neq u_i \in R_{i,\rho_i-\rho_m}$ for $i=1, \dots, m-1$.
Since $\otimes_{i=1}^m R_i$ is a domain it follows that $T$ is a domain, as well. Therefore, the map
$\phi: \sharp_{i=1}^m R_i(-\rho_i) \stackrel{\cdot f}{\rightarrow} T$ is injective.
Without loss of generality we may assume that  $\omega_T$ is the image of $\phi$.
Hence
$$
\omega_T =(u_1\otimes u_2 R_{2,\rho_1-\rho_2}\otimes\cdots \otimes u_{m-1}R_{m-1,\rho_1-\rho_{m-1}} \otimes R_{m,\rho_1-\rho_m})T.
$$

Assume $a>0$. Since the algebras $R_i$ are standard graded, we get
$$
\omega_T^a=(u_1^a\otimes u_2^a R_{2,a(\rho_1-\rho_2)}\otimes\cdots \otimes u_{m-1}^a R_{m-1,a(\rho_1-\rho_{m-1})} \otimes R_{m,a(\rho_1-\rho_m)})T.
$$
Arguing as above, we have that $\omega_T^a$ is isomorphic to the $T$-module $\sharp_{i=1}^{m}R_i(-a\rho_i)$, for all $a>0$.

With a similar argument we may identify  the anticanonical module $\omega_T^\ast \cong \sharp_{i=1}^m R_i(\rho_i)$  with the ideal
$$
\omega_T^\ast =(R_{1,\rho_1-\rho_m}\otimes v_2 R_{2,\rho_2-\rho_m}\otimes\cdots \otimes v_{m-1}R_{m-1,\rho_1-\rho_{m-1}} \otimes v_m )T,
$$
where $0\neq v_i\in R_{i,\rho_1-\rho_i}$ for   $i=2,\dots, m$.
A similar discussion shows the isomorphism $\omega_T^a \cong \sharp_{i=1}^{m}R_i(-a\rho_i)$  for negative $a$, as well.

The conclusion now follows by Theorem \ref{prop:cm-twists}.
\end{proof}

\medskip
{\bf Acknowledgement}.
 We thank an anonymous referee for valuable comments, in particular for suggesting  a correction to Theorem \ref{ToricSegre}. 

Dumitru Stamate  was supported by a fellowship at the Research Institute of the University of Bucharest (ICUB).
\medskip

\end{document}